\documentclass[12pt,a4paper,oneside,reqno]{article}

\usepackage[utf8]{inputenc}
\usepackage[english]{babel}
\usepackage[left=1in,right=1in,top=1in,bottom=1in]{geometry}
\usepackage{amsfonts,amssymb,amsmath,amsthm}
\usepackage[symbol]{footmisc}
\usepackage{longtable}
\usepackage{enumerate}
\usepackage{cite}
\usepackage{xcolor}
\usepackage{hyperref}
\usepackage{fancyhdr}
\theoremstyle{plain}
\newtheorem{theorem}{Theorem}

\newtheorem{proposition}[theorem]{Proposition}
\newtheorem{remark}[theorem]{Remark}
\newtheorem{example}[theorem]{Example}

\newtheorem{corollary}[theorem]{Corollary}
\newtheorem{question}{Question}
\newtheorem{conjecture}{Conjecture}

\DeclareMathOperator{\Aut}{Aut}

\DeclareMathOperator{\Der}{Der}
\DeclareMathOperator{\Ann}{Ann}
\DeclareMathOperator{\GL}{GL}

\newcommand{\kk}{\Bbbk}
\newcommand{\AutS}{\mathbf{Aut}}
\newcommand{\Evo}{\mathsf{Evo}}
\newcommand{\src}[1]{\par\smallskip\noindent{\small\textit{Source and status:} #1}}

\usepackage{fouriernc}

\begin{document}

\bigskip

\noindent{\large
Evolution algebras: a survey of results and a list of open problems}\footnote{
The work is supported by
FCT 2023.08031.CEECIND and UID/00212/2025; by the Spanish Ministerio de Ciencia e
Innovaci\'on 10.13039/501100011033, grants PID2023-152673NB-I00 and PID2023-149804NB-I00;
by grant Fortalece 2023/03 of ``Comunidad Aut\'onoma de La Rioja''; and by the Junta de
Andaluc\'ia through project PPRO-FQM336-G-2023 (FQM336-G-FEDER).}
\footnote{The original concept, the initial list of problems and the overall framework were developed by
I.K., C.M.G.\ and A.V. The AI system Aristotle (Harmonic, \url{https://aristotle.harmonic.fun/}) assisted
the authors in the literature search, error verification, computer experiments and the formal
verification in Lean of the new observations of Section~\ref{sec:new}.}
\bigskip

\begin{center}
{\bf
Ivan Kaygorodov\footnote{CMA-UBI, University of Beira Interior, Covilh\~{a}, Portugal; \ kaygorodov.ivan@gmail.com}, \
C\'andido Mart\'in Gonz\'alez\footnote{Departamento de \'Algebra, Geometr\'ia y Topolog\'ia, Universidad de M\'alaga, M\'alaga, Spain; \ candido\_m@uma.es} \ \& \
Antonio Viruel\footnote{Departamento de \'Algebra, Geometr\'ia y Topolog\'ia, Universidad de M\'alaga, M\'alaga, Spain; \ viruel@uma.es}}
\end{center}

\noindent {\bf Abstract:}
{\it Evolution algebras, introduced by J.~P.~Tian as an algebraic model of non-Mendelian
inheritance, are commutative algebras with a basis whose distinct elements multiply to zero.
The first part of the paper surveys the theory: low-dimensional classifications, structure
theory, nilpotency, automorphisms and derivations, directed graphs and random walks,
infinite-dimensional and dynamical aspects, and embedding theorems. The second part is a
unified list of fifty open questions and eleven new conjectures, each accompanied by the
available evidence; two of the conjectures and one of the questions are settled here. We also prove several new observations, most of them formally verified in the proof
assistant Lean (see Section~\ref{sec:lean}): every finite group is the automorphism
group of a simple evolution algebra over every field, and $|\Aut(A)|/\dim A$ takes every
positive rational value on simple evolution algebras over every field;
infinitude of the automorphism group of
nilpotent algebras, the possible numbers of idempotents of real perfect evolution
algebras, a Galois-theoretic criterion for the existence of idempotents in dimension two,
the average number of idempotents over finite fields,
and a perfect evolution algebra whose automorphism group is a non-split extension.}

\medskip

\noindent {\bf Keywords}:
{\it evolution algebra, natural basis, directed graph, automorphism group scheme, nilpotent algebra, idempotent, open problems.}

\medskip

\noindent {\bf MSC2020}:
17A60 (primary);
17D92, 05C25, 20B25, 14L15, 60J10 (secondary).

\medskip

\tableofcontents

\newpage

\section{A survey of results}\label{sec:survey}

\subsection{Introduction and basic notions}\label{sec:intro}

Let $\kk$ be a field. A   $\kk$-algebra $A$ is called an
\emph{evolution algebra} if it admits a basis $B=\{e_i\}_{i\in\Lambda}$, called a
\emph{natural basis}, such that $e_ie_j=0$ whenever $i\neq j$. Evolution algebras are
commutative, and in general they are neither associative nor power-associative. If
$\Lambda=\{1,\dots,n\}$ and
\[
e_i^2=\sum_{j=1}^n\omega_{ij}e_j,
\]
the matrix $M_B=(\omega_{ij})$ is the \emph{structure matrix} of $A$ relative to $B$.
Conversely, every square matrix is the structure matrix of an evolution algebra, so an
$n$-dimensional evolution algebra is the same thing as a square matrix modulo the action of
changes of natural basis. The map $e_i\mapsto e_i^2$ extended linearly is the
\emph{evolution operator} $L_B$. To $A$ and $B$ one attaches the directed graph
$\Gamma(A,B)$ with vertex set $\Lambda$ and an arrow $i\to j$ whenever $\omega_{ij}\ne0$
\cite{EL15}; this graph is the single most useful tool of the theory.

Several classes of evolution algebras play a distinguished role. An evolution algebra is
\emph{non-degenerate} if $e_i^2\neq0$ for every element of a natural basis; this does not
depend on the natural basis, and it is equivalent to $\Ann(A)=0$ \cite{CSV16}. It is
\emph{perfect} (also called \emph{idempotent} or \emph{regular}) if $A^2=A$; for
finite-dimensional algebras this means that $M_B$ is invertible. The basic rigidity
phenomenon is that a perfect finite-dimensional evolution algebra has a unique natural
basis up to permutation and non-zero scalars \cite{EL15}; consequently its graph is an
invariant of the algebra and its automorphisms are monomial. In general, a non-perfect
algebra may have many essentially different natural bases (for instance, the zero algebra
has all bases as natural bases), and the corresponding graphs need not be isomorphic.

We use the following conventions throughout. Unless stated otherwise, algebras are
finite-dimensional. An element $u=\sum_ix_ie_i$ is an idempotent if and only if
\begin{equation}\label{eq:idemp}
x_j=\sum_{i}\omega_{ij}x_i^2\qquad (j=1,\dots,n),
\end{equation}
a system of $n$ quadratic equations in $n$ unknowns. The lower powers are 
\begin{center}$A^1=A$,
$A^{k}=\sum_{i+j=k}A^iA^j$,\end{center} and $A$ is nilpotent of index $k$ if $A^k=0\neq A^{k-1}$. The
derived series is 
\begin{center}
$A^{(1)}=A^2$, $A^{(m+1)}=(A^{(m)})^2$,
\end{center} and $A^{(\infty)}$ denotes its
stable term. We write $\AutS(A)$ for the affine group scheme of automorphisms and
$\Aut(A)=\AutS(A)(\kk)$.

\begin{example}
If $e_i^2=e_i$ for all $i$, then $A\cong\kk^n$ is the associative algebra of functions
on $n$ points; it has $2^n$ idempotents and $\Aut(A)\cong S_n$.
\end{example}

\begin{example}
The algebra with $e_i^2=e_{i+1}$ (indices modulo $n$) is perfect and its graph is an
oriented $n$-cycle. Its diagonal automorphisms are $e_i\mapsto t^{2^{i-1}}e_i$ with
$t^{2^n-1}=1$, and $\Aut(A)$ has order $n(2^n-1)$.
\end{example}

\begin{example}
If $e_1^2=e_2$ and $e_2^2=0$, the algebra is nilpotent of index $3$; its graph is a
single arrow, and the algebra has infinitely many automorphisms.
\end{example}

\subsubsection*{Five basic results}
The following statements are used throughout the paper; they are the backbone of the
finite-dimensional theory.

\begin{theorem}[Elduque--Labra \cite{EL15}]\label{thm:unique}
If $A$ is a perfect finite-dimensional evolution algebra and $\{e_i\}$, $\{f_i\}$ are natural
bases, then there are a permutation $\sigma$ and scalars $t_i\in\kk^\times$ with
$f_i=t_ie_{\sigma(i)}$. In particular $\Gamma(A,B)$ does not depend on $B$ up to isomorphism,
every automorphism of $A$ is monomial, and $\Aut(A)$ is finite.
\end{theorem}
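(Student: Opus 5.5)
The plan is to reduce the statement to a determinant identity between the two structure matrices. Let $B=\{e_i\}$ and $B'=\{f_i\}$ be natural bases, and write $f_i=\sum_k p_{ki}e_k$ with $P=(p_{ki})$ invertible. Since $e_ke_l=0$ for $k\neq l$, we get
\[
f_if_j=\sum_k p_{ki}p_{kj}\,e_k^2 .
\]
Because $A$ is perfect, the squares $e_1^2,\dots,e_n^2$ span $A^2=A$. There are $n$ of them, so they form a basis of $A$; this is exactly the invertibility of $M_B$. The natural-basis condition $f_if_j=0$ for $i\neq j$ therefore becomes
\[
p_{ki}p_{kj}=0\quad\text{for all } k \text{ and all } i\neq j.
\]
In words, every row $k$ of $P$ has at most one nonzero entry. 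An invertible matrix has no zero row, so each row has exactly one nonzero entry. Since $P$ is invertible, its columns are also nonzero, and $n$ nonzero entries spread one per row across $n$ columns must land one per column. Hence $P$ is monomial: $f_i=t_ie_{\sigma(i)}$ for some permutation $\sigma$ and scalars $t_i\in\kk^\times$. This is the whole heart of the argument.

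The consequences then follow in turn.

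\emph{The graph.} The relation $f_i^2=t_i^2e_{\sigma(i)}^2=\sum_j t_i^2\omega_{\sigma(i)j}t_j^{-1}f_{\sigma^{-1}(j)}$ shows that the structure matrix with respect to $\{f_i\}$ is obtained from $M_B$ by simultaneously permuting rows and columns by $\sigma$ and rescaling by nonzero factors. Consequently $i\to j$ is an arrow of $\Gamma(A,B')$ if and only if $\sigma(i)\to\sigma(j)$ is an arrow of $\Gamma(A,B)$, so $\sigma$ is a graph isomorphism.

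\emph{Monomial automorphisms.} If $\varphi$ is an automorphism, then $\{\varphi(e_i)\}$ is again a natural basis. By the first step, $\varphi(e_i)=t_ie_{\sigma(i)}$, so $\varphi$ is monomial.

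\emph{Finiteness.} Applying $\varphi$ to $e_i^2=\sum_j\omega_{ij}e_j$ gives
\[
t_i^2\,\omega_{\sigma(i)\sigma(j)}=\omega_{ij}\,t_j .
\]
Fix $\sigma$ (there are finitely many choices) and form the diagonal matrix $D=\mathrm{diag}(t_i)$. With $P_\sigma$ the permutation matrix, the relations take the matrix form $D^2P_\sigma M_BP_\sigma^{-1}=M_BD$, which gives
\[
D=M_B^{-1}D^2M_\sigma, \qquad M_\sigma:=P_\sigma M_BP_\sigma^{-1}.
\]
This uses the invertibility of $M_B$ once more.

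The main obstacle is to conclude from this that only finitely many $D$ occur; note that $\kk$ may be infinite. My approach would be to view the $t_i$ as unknowns in the system $t_j=\sum_i c_{ij}t_i^2$. This is the form of the idempotent system \eqref{eq:idemp}, with $t_i\neq0$, and here $C=M_\sigma^{-1}$ is invertible.

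I would show the solution set is finite through a degree argument. Homogenize by $t_j s=\sum_i c_{ij}t_i^2$. The projective variety this defines has no points at infinity: setting $s=0$ forces $\sum_i c_{ij}t_i^2=0$ for all $j$, and invertibility of $C$ then gives all $t_i^2=0$, hence all $t_i=0$. By Bézout, or equivalently because a positive-dimensional affine component would have to meet the hyperplane at infinity, the affine solution set over $\bar\kk$ is finite, of size at most $2^n$.

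Combining the finitely many choices of $\sigma$ with at most $2^n$ solutions $D$ for each, we obtain $|\Aut(A)|\le n!\,2^n$, and in particular $\Aut(A)$ is finite.
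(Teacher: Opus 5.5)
Your first step is correct and is the standard argument of \cite{EL15}; the paper uses the same row/column count in the proof of Proposition~\ref{prop:nil}(1). The invariance of the graph is also correct, though the factor in your formula for $f_i^2$ should be $t_{\sigma^{-1}(j)}^{-1}$ rather than $t_j^{-1}$; this does not affect the conclusion. The monomiality of automorphisms and the relations $t_i^2\omega_{\sigma(i)\sigma(j)}=\omega_{ij}t_j$ are right as well.

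The gap is in the finiteness step. The square system $t_j=\sum_i c_{ij}t_i^2$ with $C=M_\sigma^{-1}$ does not follow from these relations, so the automorphisms need not lie among its solutions. Take $n=1$, $e^2=2e$ in characteristic $\neq2$. The relation reads $2t^2=2t$, which forces $t=1$. But $t=\tfrac12t^2$ has only the non-zero root $t=2$. More generally, for $\sigma=\mathrm{id}$ the identity automorphism solves $t_j=\sum_i(M_B^{-1})_{ij}t_i^2$ only if every column sum of $M_B$ equals $1$.

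The most obvious square system extracted from $D=M_B^{-1}D^2M_\sigma$ does not work either. Taking diagonal entries gives $c_{ij}=(M_B^{-1})_{ji}\,\omega_{\sigma(i)\sigma(j)}$, the Hadamard product $(M_B^{-1})^{T}\circ M_\sigma$, which can be singular. For $\sigma=\mathrm{id}$ and $M_B=\begin{pmatrix}1&1\\-1&1\end{pmatrix}$ in characteristic $\neq2$ it equals $\frac12\begin{pmatrix}1&1\\1&1\end{pmatrix}$. The homogenised system then has the points at infinity $t_1^2+t_2^2=0$, so your ``no points at infinity'' argument fails.

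What you need is a choice of equations that guarantees invertibility. Since $\det M_B\neq0$, some permutation $\pi$ satisfies $\omega_{\pi(j)j}\neq0$ for all $j$. The relations for the pairs $(\pi(j),j)$ give $t_j=\lambda_j t_{\pi(j)}^2$ with $\lambda_j=\omega_{\sigma\pi(j)\,\sigma(j)}/\omega_{\pi(j)j}$. This is a system of your shape with a monomial, hence invertible, $C$, so your B\'ezout argument now applies verbatim. More directly, along a cycle of $\pi$ of length $\ell$ one gets $t_j^{2^\ell-1}=\mathrm{const}$, and the remaining $t$'s on that cycle are determined by $t_j$.

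This is the mechanism behind the paper's viewpoint via \eqref{eq:EL} and Remark~\ref{prop:diag}. The lifts of a fixed $\sigma$ form a coset of $\mathbf D(\kk)=\{t:\ t_y=t_x^2\text{ for all arrows }x\to y\}$. That group is finite because every vertex receives an arrow, the columns of $M_B$ being non-zero.
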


\begin{theorem}[\cite{CLOR14,CGOT13a,EL15}]\label{thm:nilp}
For a finite-dimensional evolution algebra $A$ the following are equivalent: $A$ is nil
(with respect to the principal powers $x^{k+1}=x^kx$; see Proposition~\ref{prop:plenary}
for plenary powers); $A$
is right nilpotent; $A$ is nilpotent; for some (equivalently, every) natural basis $B$ the
graph $\Gamma(A,B)$ has no oriented cycles; in some natural basis the structure matrix is
strictly upper triangular. The nilpotency index of an $n$-dimensional nilpotent evolution
algebra is at most $2^{n-1}+1$, and this bound is attained.
\end{theorem}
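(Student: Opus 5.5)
The plan is to prove the cycle of implications
\[
\text{nilpotent}\Rightarrow\text{no oriented cycles in every }\Gamma(A,B)\Rightarrow\text{strictly upper triangular }M_B\Rightarrow\text{nilpotent},
\]
to attach the bound to the last implication, and then to fit in ``right nilpotent'' and ``nil''. Nilpotent implies right nilpotent, and right nilpotent implies nil, because the principal power $x^{k}$ lies in the $k$-th right power $A^{\langle k\rangle}$, and $A^{\langle k\rangle}\subseteq A^k$.

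\emph{Nilpotent implies acyclic in every natural basis.} If $A\neq0$ is nilpotent of index $k$, then $0\neq A^{k-1}\subseteq\Ann(A)$. For a natural basis $B$ one has $\Ann(A)=\operatorname{span}\{e_i: e_i^2=0\}$ \cite{CSV16}, so some $e_i$ is a sink of $\Gamma(A,B)$. Then $\kk e_i$ is an ideal, and $A/\kk e_i$ is again a nilpotent evolution algebra. The images of the remaining $e_j$ form a natural basis of it, whose graph is $\Gamma(A,B)$ with vertex $i$ deleted. By induction on $n$ that graph is acyclic, and adding back a sink creates no cycle.

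\emph{Acyclic implies strictly upper triangular.} Take a topological ordering of the vertices, so that every arrow goes $i\to j$ with $j>i$.

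\emph{Upper triangular implies nilpotent, with the bound.} Put $W_k=\operatorname{span}\{e_j:2^{j-1}\ge k\}$. I will show $A^k\subseteq W_k$ by induction on $k$, the case $k=1$ being trivial. A product $A^iA^j$ with $i+j=k$ lies in $W_iW_j$. Since distinct basis vectors multiply to zero, $W_iW_j$ is spanned by the squares $e_l^2$ with $2^{l-1}\ge\max(i,j)$. Each such $e_l^2$ lies in $\operatorname{span}\{e_m:m>l\}$, and $2^{m-1}\ge 2^l\ge i+j$. Hence $A^{2^{n-1}+1}\subseteq W_{2^{n-1}+1}=0$.

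\emph{Sharpness.} Take $e_i^2=e_{i+1}$ for $i<n$ and $e_n^2=0$. By induction $e_{j}\in A^{2^{j-1}}$, since $e_{j+1}=e_je_j$. Hence $A^{2^{n-1}}\ni e_n\neq0$, and the index is exactly $2^{n-1}+1$.

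\emph{The main obstacle: nil implies nilpotent.} Here only principal powers are available, and basis vectors alone do not detect cycles, since $e_i^3=\omega_{ii}e_i^2$. I would argue by contraposition: take a natural basis whose graph has an oriented cycle, and choose one of minimal length $m$ with vertices $i_1\to\dots\to i_m\to i_1$. I would then try $x=\sum_r c_re_{i_r}$ with suitable scalars and track the coordinates of $x^k$ on the cycle vertices. The coordinates of $x^{k+1}$ are the polynomials $\sum_i\omega_{ij}x_i(x^k)_i$, so the goal is to find a coordinate of $x^{k}$ on the cycle whose leading monomial in the $c_r$ never vanishes.

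Over an infinite field one can then choose the $c_r$ outside the zero locus. Over a finite field I would first try to run the same argument after extending scalars. That step requires checking that being nil, in the sense used in the statement, survives extension; if it does not, I would fall back on choosing $x$ explicitly along the minimal cycle, exploiting the minimality to control cancellations. This is the step where I expect genuine difficulty, and I would consult \cite{CLOR14,CGOT13a} for the intended notion of nil.
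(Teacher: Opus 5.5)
\textbf{Gap: nil $\Rightarrow$ nilpotent is not proved.} The following parts of your proposal are correct:
\begin{itemize}
\item nilpotent $\Rightarrow$ acyclic in every natural basis, via a sink $e_i\in\Ann(A)$ and the quotient $A/\kk e_i$;
\item acyclic $\Rightarrow$ strictly upper triangular;
\item the filtration $W_k$, which gives $A^{2^{n-1}+1}=0$;
\item the sharpness example;
\item nilpotent $\Rightarrow$ right nilpotent $\Rightarrow$ nil.
\end{itemize}
Your sink induction also gives right nilpotent $\Rightarrow$ acyclic directly. Indeed, $0\neq A^{\langle k-1\rangle}\subseteq\Ann(A)$ when $A^{\langle k\rangle}=0$, and quotients of right nilpotent algebras are right nilpotent. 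So only nil $\Rightarrow$ nilpotent is missing, and you leave it as a plan that does not work as stated:
\begin{itemize}
\item Over a finite field, a non-zero polynomial in the $c_r$ can vanish on all of $\kk^m$.
\item You cannot pass to $A\otimes\bar\kk$, because nothing tells you a priori that being nil survives extension of scalars.
\item Even over a countable infinite field, avoiding the zero loci of infinitely many coordinates of the powers $x^k$ at once needs an extra remark. Namely, $x^{k}=R_x^{k-1}x$ with $R_x$ right multiplication by $x$, so $x$ is nil if and only if $x^{n+1}=0$.
\end{itemize}

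\textbf{The missing idea: restrict to a minimal cycle.} No genericity is needed. Let $i_1\to\dots\to i_m\to i_1$ be an oriented cycle of minimal length in $\Gamma(A,B)$, and put $S=\{i_1,\dots,i_m\}$.
\begin{itemize}
\item Minimality forces $\omega_{i_ri_s}=0$ unless $s\equiv r+1\pmod m$. A loop, or a chord $i_r\to i_s$, would close a cycle shorter than $m$ using part of the original one.
\item If $x$ is supported on $S$, then $(zx)_j=\sum_{i\in S}\omega_{ij}z_ix_i$ for every $z\in A$. So the $S$-coordinates of $zx$ depend only on the $S$-coordinates of $z$: the principal powers of $x$, read on $S$, evolve autonomously.
\item Take $x=\sum_r e_{i_r}$. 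Then $(x^{k+1})_{i_{s+1}}=\omega_{i_si_{s+1}}(x^k)_{i_s}$, with indices modulo $m$.
\item Hence the $S$-part of $x^k$ is obtained from $(1,\dots,1)$ by $k-1$ applications of an invertible weighted cyclic shift. Every coordinate is a non-zero product of structure constants, so $x^k\neq0$ for all $k$.
\end{itemize}
This works over every field, and for $m=1$ it reduces to $e_i^k=\omega_{ii}^{k-2}e_i^2$. It shows that an oriented cycle in some natural basis prevents $A$ from being nil, which closes your chain of implications.
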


\begin{theorem}[Cabrera Casado--Siles Molina--Velasco \cite{CSV16}]\label{thm:frag}
A non-degenerate evolution algebra is the direct sum of irreducible evolution ideals,
uniquely up to order; the summands correspond to the connected components of the underlying
undirected graph of $\Gamma(A,B)$. A finite-dimensional evolution algebra is simple if and
only if it is perfect and $\Gamma(A,B)$ is strongly connected.
\end{theorem}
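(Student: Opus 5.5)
We prove the two assertions of Theorem~\ref{thm:frag} separately; both rest on the fact that subspaces spanned by subsets of a natural basis behave well with respect to the multiplication. Fix a natural basis $B=\{e_i\}_{i\in\Lambda}$.

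\emph{Existence of the decomposition.} Let $\Lambda=\bigsqcup_k\Lambda_k$ be the partition into connected components of the underlying undirected graph of $\Gamma(A,B)$, and put $A_k=\mathrm{span}\{e_i:i\in\Lambda_k\}$. Then $A=\bigoplus_k A_k$, and $A_kA_l=0$ for $k\ne l$ because $e_ie_j=0$ for $i\neq j$. Moreover $e_i^2\in A_k$ for $i\in\Lambda_k$, since every arrow out of $i$ stays in its component. Hence each $A_k$ is an evolution ideal. Each $A_k$ is also irreducible. If $A_k=I\oplus J$ with $I,J$ nonzero evolution ideals, then natural bases of $I$ and $J$ together give a natural basis of $A_k$. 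This basis need not coincide with $\{e_i\}_{i\in\Lambda_k}$, and this is the point where non-degeneracy is used.

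\emph{Uniqueness and transfer between bases.} This is the main obstacle. I would use the intrinsic description from \cite{CSV16}: when $\Ann(A)=0$, a decomposition $A=I\oplus J$ into ideals with $IJ=0$ is determined by the algebra. For $x\in A$, write $x=x_I+x_J$. The relation $IJ=0$ forces $I\subseteq\Ann(J)$, and non-degeneracy gives $\Ann(J)\cap J=0$; therefore $I=\Ann(J)$ and $J=\Ann(I)$. Next I would show that every element of a natural basis lies in $I$ or in $J$. Take $e_i$ and write $e_i=a+b$ with $a\in I$, $b\in J$. For $j\ne i$ we have $0=e_ie_j$, and since the products split along $I\oplus J$, this gives $ae_j=0=be_j$. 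If both $a$ and $b$ were nonzero, then $a\in\Ann(A)$ would follow from $ae_j=0$ for $j\neq i$ together with $a(a+b)=a^2=ae_i$. I would push this further using $a^2\neq 0$, exploiting $b\,e_i=b^2$ and $ab=0$, to obtain an element killed by everything, contradicting $\Ann(A)=0$. Once each $e_i$ lies in $I$ or in $J$, a splitting of $A_k$ would split $\Lambda_k$ into two sets with no arrows between them, contradicting connectedness. Uniqueness follows in the same way: any other decomposition into irreducible evolution ideals is compatible with $B$, so its summands are unions of the $A_k$, and irreducibility forces them to equal the $A_k$.

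\emph{Simplicity criterion.} Suppose $A$ is simple. Then $A^2$ is an ideal, and $A^2\ne0$ by the definition of simplicity, so $A^2=A$ and $A$ is perfect. If $\Gamma$ were not strongly connected, choose a proper nonempty subset $S\subseteq\Lambda$ closed under out-arrows, for instance the set of vertices reachable from some $i$ when that set is proper. Then $\mathrm{span}\{e_j:j\in S\}$ is a proper nonzero ideal, since $e_je_l\in\{0,e_j^2\}$ and $e_j^2$ lies in the span for $j\in S$. This contradicts simplicity.

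Conversely, suppose $A$ is perfect and $\Gamma$ is strongly connected, and let $I\ne0$ be an ideal. Take $0\ne x=\sum x_ie_i\in I$. Then $xe_i=x_ie_i^2\in I$ for each $i$. Since $M_B$ is invertible, the $e_i^2$ are linearly independent, and some $x_i\ne0$, so $e_i^2\in I$. Perfectness gives that $L_B$ is invertible, and I would use it to extract basis vectors from the squares. Let $T=\{j:e_j^2\in I\}$, which is nonempty. If $j\in T$ and $j\to l$, then $e_j^2e_l=\omega_{jl}e_l^2\in I$, so $l\in T$. By strong connectivity $T=\Lambda$. Hence $I\supseteq\mathrm{span}\{e_j^2\}=A^2=A$, and $A$ is simple.
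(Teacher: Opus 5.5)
The paper quotes this theorem without proof, so I assess your argument on its own terms. The existence argument is correct. So is the reduction of irreducibility and uniqueness to the claim that, for a splitting $A=I\oplus J$ into ideals, every natural basis vector lies in $I$ or in $J$. Both directions of the simplicity criterion are also correct.

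The gap is that claim itself. It is the only place where non-degeneracy enters, and the whole uniqueness statement rests on it, but you do not prove it. The inference you write down is false as stated: $ae_j=0$ for $j\neq i$ and $ae_i=a^2$ give $a\in\Ann(A)$ only when $a^2=0$. All you actually get is $a^2\neq0$, and likewise $b^2\neq0$ if $b\neq0$. Your planned continuation via $a^2\neq0$, $be_i=b^2$ and $ab=0$ cannot close the argument, because it only produces candidates in the span of $a$ and $b$, and none of these annihilates $A$. Indeed $(\alpha a+\beta b)e_i=\alpha a^2+\beta b^2$, with $0\neq a^2\in I$ and $0\neq b^2\in J$, vanishes only for $\alpha=\beta=0$. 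The observation $I=\Ann(J)$ does not help either: it says each summand determines the other, not that the decomposition is unique.

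The missing idea is to use non-degeneracy at the \emph{other} basis vectors, through the coordinates of $a$ in $B$.
\begin{itemize}
\item Write $a=\sum_kc_ke_k$. Then $ae_k=c_ke_k^2$, and $ae_k=0$ together with $e_k^2\neq0$ gives $c_k=0$ for every $k\neq i$. Equivalently, $c_ke_k\in\Ann(A)=0$.
\item Hence $a=c_ie_i=c_ia+c_ib$, so $(1-c_i)a=c_ib\in I\cap J=0$. This forces $a=0$ or $b=0$.
\end{itemize}
This works in any dimension, and with it the rest of your plan goes through. Take a decomposition into ideals $A=\bigoplus_mC_m$ and apply the claim to $A=C_m\oplus\bigoplus_{m'\neq m}C_{m'}$. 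This places each $e_i$ in exactly one $C_m$, so $C_m=\mathrm{span}\{e_i:i\in S_m\}$ for a partition $\Lambda=\bigsqcup_mS_m$. Since each $C_m$ is an ideal, there are no arrows between different blocks, so each $S_m$ is a union of components. Irreducibility of $C_m$ then forces $S_m$ to be a single $\Lambda_k$. Finally, an ideal of $A_k$ is an ideal of $A$, so your connectedness argument also gives the irreducibility of each $A_k$.
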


\begin{theorem}[Elduque--Labra \cite{EL21}]\label{thm:ELseq}
For a perfect finite-dimensional evolution algebra the automorphism group scheme is an
extension of a constant group of permutations of the vertices of the graph by a finite
diagonalisable group scheme $\mathbf D$, both determined by the graph; see \eqref{eq:EL}
below. In particular $\Der(A)=0$ if $\operatorname{char}\kk\in\{0,2\}$.
\end{theorem}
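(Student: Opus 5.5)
Since $A$ is perfect, Theorem~\ref{thm:unique} shows that every automorphism, and more generally every $R$-point of $\AutS(A)$ for a $\kk$-algebra $R$, is monomial with respect to the natural basis $B=\{e_1,\dots,e_n\}$. To make this work at the scheme level, I will rerun the argument of Theorem~\ref{thm:unique} over $R$. Write $\varphi(e_i)=\sum_j a_{ji}e_j$. The relations $\varphi(e_i)\varphi(e_k)=0$ for $i\neq k$ give
\[
\sum_j a_{ji}a_{jk}\,e_j^2=0 ,
\]
and since $M_B$ is invertible over $\kk$ (perfectness), this forces $a_{ji}a_{jk}=0$ for all $j$ and all $i\ne k$. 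Because the matrix $(a_{ji})$ is invertible, on each connected component of $\operatorname{Spec}R$ every row has exactly one non-zero entry, and that entry is a unit. This gives a permutation $\sigma$ and units $t_i$ with $\varphi(e_i)=t_ie_{\sigma(i)}$, locally on $\operatorname{Spec}R$.

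Next I impose $\varphi(e_i^2)=\varphi(e_i)^2$, which reads
\[
\sum_j\omega_{ij}t_je_{\sigma(j)}=t_i^2\sum_j\omega_{\sigma(i)j}e_j ,
\]
that is,
\[
\omega_{ij}t_j=t_i^2\,\omega_{\sigma(i)\sigma(j)} .
\]
Two consequences follow. First, $\sigma$ preserves the support of $M_B$, so it is an automorphism of $\Gamma(A,B)$. Second, the diagonal part, i.e.\ the kernel of $\varphi\mapsto\sigma$, is the closed subgroup scheme $\mathbf D$ of $\mathbf G_m^n$ cut out by the monomial equations $t_j=t_i^2$ for each arrow $i\to j$. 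A subgroup scheme of a torus defined by characters is diagonalisable, being the Cartier dual of the quotient of $\mathbb Z^n$ by the subgroup generated by the characters $\epsilon_j-2\epsilon_i$.

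For finiteness, I note that the quotient $\mathbb Z^n/L$ is finite exactly when $L$ has rank $n$. The relations include $\epsilon_j-2\epsilon_i$ over all arrows, and the invertibility of $M_B$ gives a permutation $\pi$ with $\omega_{i\pi(i)}\ne0$ for every $i$. The vectors $\epsilon_{\pi(i)}-2\epsilon_i$ form the matrix $P-2I$, where $P$ is a permutation matrix. Its eigenvalues are $\zeta-2$ with $\zeta$ a root of unity, which is never zero, so the matrix has full rank. Hence $\mathbf D$ is finite, and it is determined by the graph alone. The image of $\AutS(A)\to$ (constant group $\Aut\Gamma$) is the constant subgroup of permutations $\sigma$ for which the equations above are solvable; this is the sequence \eqref{eq:EL}. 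The main obstacle is making the monomial argument precise over non-reduced or disconnected $R$, and I would handle it by working with idempotents of $R$, as indicated above.

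Finally, for derivations, $\Der(A)$ is the Lie algebra of $\AutS(A)$, which coincides with $\operatorname{Lie}\mathbf D$ because the constant part is étale. Take $R=\kk[\varepsilon]$ and $t_i=1+\varepsilon d_i$. The equations become $d_j=2d_i$ for each arrow $i\to j$. In characteristic $2$ this gives $d_j=0$ for every $j$ with an incoming arrow, and every vertex has one by the invertibility of $M_B$, so $\Der(A)=0$. In characteristic $0$, applying these equations along the permutation $\pi$ gives $(P-2I)d=0$, and since $P-2I$ is invertible over $\mathbb Q$, again $d=0$.
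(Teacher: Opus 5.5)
The paper does not prove this theorem; it quotes it from \cite{EL21}. Your argument is correct and follows the same lines as that source. The $R$-points of $\AutS(A)$ become monomial after splitting $\operatorname{Spec}R$ by orthogonal idempotents, the permutation lands in the constant group scheme on $\Aut(\Gamma)$, and the kernel is the diagonalisable subgroup of $\mathbf G_m^n$ cut out by $t_j=t_i^2$ along the arrows.

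The genuine difference is in how you get finiteness and the derivation statement. The source determines $\mathbf D$ explicitly, as a product $\prod_j\mu_{2^{h_j}-1}$ over the indecomposable components, with $h_j$ the balance. This is the description recalled in Remark~\ref{prop:diag}. Finiteness and $\operatorname{Lie}\mathbf D=0$ in characteristic $0$ and $2$ then follow because these orders are finite and odd. You avoid computing balances by choosing a permutation $\pi$ in the support of $\det M_B$. Since $\det(2I-P)=\prod_{\text{cycles of }\pi}(2^{l}-1)$, your lattice $L$ has finite odd index. This gives finiteness, and even the odd order of $\mathbf D$, in two lines, and $d_{\pi(i)}=2d_i$ kills derivations directly. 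What you lose is the exact order of $\mathbf D$, and hence the precise odd characteristics in which non-zero derivations actually occur.

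Two small points should be tightened.

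\emph{Clopen decomposition.} ``On each connected component'' is not quite the right formulation, since components of $\operatorname{Spec}R$ need not be open. Argue instead as follows.
For fixed $j$, the open sets $D(a_{ji})$ are pairwise disjoint, because $a_{ji}a_{jk}=0$. They cover $\operatorname{Spec}R$, because row $j$ of an invertible matrix generates the unit ideal (Laplace expansion). Hence they are clopen. Intersecting over $j$ gives finitely many orthogonal idempotents $\epsilon_\sigma$ with $\sum_\sigma\epsilon_\sigma=1$, on each of which $\varphi$ is monomial with permutation $\sigma$. This works verbatim for non-reduced $R$, so it is not a real obstacle.

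\emph{The image.} The image of $\AutS(A)\to\mathbf S$ consists of the $\sigma$ for which your equations are solvable over $\overline{\kk}$, not over $\kk$. For instance, take $e_1^2=e_2$, $e_2^2=2e_1$ over $\mathbb Q$. The transposition lies in the image, since it requires only $t_1^3=1/2$ and $t_2=2t_1^2$, yet $\Aut(A)=1$. So on $\kk$-points the sequence is only left exact.
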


\begin{theorem}[Costoya--Ligouras--Tocino--Viruel \cite{CLTV22}]\label{thm:univ}
For every field $\kk$ and every finite group $G$ there are infinitely many pairwise
non-isomorphic perfect evolution $\kk$-algebras $A$ with $\Aut(A)\cong G$.
\end{theorem}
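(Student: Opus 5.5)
The plan is to realise $G$ through a perfect evolution algebra whose graph is rigid and has automorphism group exactly $G$, and for which the diagonal group $\mathbf D(\kk)$ of Theorem~\ref{thm:ELseq} is trivial. By Theorem~\ref{thm:unique}, every automorphism of a perfect $A$ is monomial: it has the form $e_i\mapsto t_ie_{\sigma(i)}$. Such a map is an automorphism iff $\sigma$ preserves the structure matrix up to the scalars, that is,
\[
t_i^2\,\omega_{\sigma(i)\sigma(j)}=t_j\,\omega_{ij}\qquad\text{for all } i,j.
\]
We first choose a finite digraph $\Gamma$ with $\operatorname{Aut}(\Gamma)\cong G$. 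Frucht-type constructions give such a graph; since a Cayley digraph coloured by generators can be encoded by gadgets, there are infinitely many non-isomorphic choices. We then weight it so that the resulting matrix $M$ is invertible.

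Next we add a loop at every vertex, taking $\omega_{ii}=1$, and give all other arrows weight $1$ as well. The loop equation at $i$ then reads $t_i^2=t_i$, which forces $t_i=1$. Consequently every automorphism is a pure permutation, and a permutation is an automorphism iff it is a graph automorphism. This gives $\Aut(A)\cong \operatorname{Aut}(\Gamma)\cong G$. The setup is independent of $\kk$ apart from invertibility.

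The main obstacle is ensuring $\det M\neq0$ over every field, including $\mathbb F_2$, while keeping $G$-symmetry; we need $M$ invariant under $G$, so the weights cannot be generic. To handle this, we make the adjacency part nilpotent relative to the identity. We orient the gadget graph as a directed acyclic graph whose automorphism group is still $G$: Frucht-type gadget digraphs can be taken to be layered, with arrows going from group vertices to gadget vertices only. Then $M=I+N$ with $N$ strictly triangular in a suitable order, so $\det M=1$ in every characteristic.

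Infinitely many non-isomorphic algebras follow by varying the gadget lengths. By Theorem~\ref{thm:unique} the graph, with loops, is an isomorphism invariant of a perfect algebra, so non-isomorphic graphs give non-isomorphic algebras.

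Finally, one must check that adding loops does not create extra graph automorphisms. Every vertex now carries a loop, so loops impose no additional constraint and the automorphism group of the looped graph is still $\operatorname{Aut}(\Gamma)$.
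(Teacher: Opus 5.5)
Your argument is correct, but it controls the determinant differently from the paper. The paper does not reprove this cited result. Its own construction (Theorem~\ref{thm:simpleuniv}) starts from a \emph{strongly connected} digraph $\Delta$ and doubles the vertices. A loop at each $e_v$ and a pendant $2$-cycle $e_v\rightleftarrows e'_v$ give $M=\left(\begin{smallmatrix} I+N & I\\ I & 0\end{smallmatrix}\right)$, so $\det M=\pm1$ for \emph{every} $N$. The loops, placed only at the $e_v$, both kill the diagonal scalars and separate $e_v$ from $e'_v$.

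You put the burden on the digraph instead. Acyclicity makes $N$ nilpotent, so $M=I+N$ is unitriangular in a topological order, and loops everywhere force $t_i=1$. This is shorter and needs no auxiliary vertices or primed/unprimed case analysis. However, an acyclic digraph on at least two vertices is never strongly connected, so by Theorem~\ref{thm:frag} you only get perfect, never simple, algebras. The pendant $2$-cycles are exactly what buys simplicity in the paper, and hence Corollary~\ref{cor:growthall} and the answer to Question~\ref{q:smallfields}. For the statement at hand, which only asks for perfect algebras, your route is enough.

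The step you should actually write out is the existence of an acyclic digraph with automorphism group $G$. ``Frucht-type gadget digraphs can be taken to be layered'' is only asserted. In its most literal form it is false: take the elements of $G$ as sources and all gadget vertices as sinks, and let $G=\mathbb Z_3$. Every $\mathbb Z_3$-invariant family of subsets of a $3$-set is $S_3$-invariant, so the digraph acquires extra automorphisms.

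An explicit choice that works is the following. Take a simple graph $(V,E)$ with automorphism group $G$, and form the digraph on $V\sqcup E$ with arrows $v\to e$ for $v\in e$. Not counting loops, vertices of $V$ have in-degree $0$ and those of $E$ have in-degree $2$. Hence its automorphism group is $\Aut(V,E)\cong G$, and $M=\left(\begin{smallmatrix} I&B\\ 0&I\end{smallmatrix}\right)$. This is precisely the one-way version of Remark~\ref{rem:incidence}, and orienting in one direction removes the small-field obstruction described there.

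To get infinitely many examples, add as a new component a looped directed path on $k\ge2$ vertices. No vertex of the incidence digraph has in-degree $1$, so every automorphism maps the path to itself, and a directed path is rigid. The group therefore stays $G$ while the dimension grows, which separates the algebras without appealing to ``gadget lengths''.
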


\subsubsection*{A short chronology}
The history of the subject can be summarised as follows. The notion was introduced in 2006
\cite{TV06} and systematised in Tian's 2008 monograph \cite{Tian08}. Between 2010 and 2014 Rozikov, Omirov,
Ladra and their coauthors connected evolution algebras with population dynamics, Gibbs
measures, chains and dibaric algebras \cite{RT11,CLR11,LR13,LOR14,LLR14}, and the first
structural results (nilpotency, derivations, classification in dimension two) appeared
\cite{CGOT13a,CGOT13b,CLOR13,CLOR14}. In 2015--2017 three foundational contributions changed
the landscape: the graph-theoretic viewpoint and the uniqueness of natural bases for perfect
algebras \cite{EL15,EL16}, the structure theory in arbitrary dimension \cite{CSV16} and the
classification in dimension three \cite{CSV17}; at the same time nilpotent algebras were
classified in low dimension \cite{HA15} and gonosomal and isotopic viewpoints were
introduced \cite{Var16,FFN17}. From 2019 onwards automorphism group schemes
\cite{EL21}, universality \cite{CLTV22,SZ22}, random walks on graphs \cite{CRR20,CRR21},
Hilbert evolution algebras \cite{VCR23} and the systematic study of ideals through graphs
\cite{CMMT25i,CCGMMR26} became central, and a historical survey appeared \cite{CFNT22}. The
most recent work concerns non-degenerate algebras \cite{CFK24}, counting over finite fields
\cite{WZ23}, lattices of subalgebras \cite{LPPR25,LPR25r,LPR26m,LPR25F}, deformations
\cite{MPR}, idempotents and solvability \cite{GMPR26,HW}, Grothendieck rings \cite{ACV} and
embeddings of commutative algebras \cite{CFV}.

The first part of the paper surveys what is known. We have tried to be complete with
respect to the main lines of research, but the literature is large and we apologise for
any omission. Further general accounts are the monograph of Tian \cite{Tian08}, the
historical survey of Ceballos, Falc\'on, N\'u\~nez-Vald\'es and Tenorio \cite{CFNT22}, the
short survey \cite{Tian16}, and the general survey on classification problems for
non-associative structures \cite{Kay24} and its geometric counterpart \cite{KKP24}. The second part contains the open questions and
conjectures, together with a number of new observations.

\subsection{Origins: genetics, Markov chains and Tian's programme}\label{sec:origins}

Evolution algebras were introduced by Tian and Vojt\v{e}chovsk\'y \cite{TV06} and developed
systematically in Tian's monograph \cite{Tian08}. The motivation comes from
non-Mendelian genetics: in asexual (uniparental) inheritance, for instance of organelle
genes, a gamete of type $e_i$ reproduces by ``self-crossing'', and the offspring
distribution is encoded by $e_i^2=\sum_j\omega_{ij}e_j$, while distinct gametes do not
interact, which gives $e_ie_j=0$. When the rows of $M_B$ are probability vectors, the
evolution operator is the transition operator of a discrete-time Markov chain, and Tian
interpreted the basic notions of Markov chains in algebraic terms: persistent and
transient generators, communication classes, periodicity and absorption. He proved a
hierarchical (``semi-direct'') decomposition of an evolution algebra into simple
components and transient spaces, related algebraic periods to the periods of Markov
chains, and introduced the evolution algebra of a graph, defined by the adjacency matrix.
He also gave applications to the population genetics of organelle heredity and to the
analysis of Markov processes. These foundational ideas, collected in \cite{Tian08} and in the
programmatic paper \cite{Tian16}, set the agenda for much of the later work.

Soon afterwards evolution algebras were connected with other parts of mathematical
biology and statistical mechanics. Rozikov and Tian \cite{RT11} associated evolution
algebras with Gibbs measures on graphs, and Ladra and Rozikov introduced the evolution
algebra of a bisexual population \cite{LR13}, a non-baric but dibaric algebra; see also
\cite{LOR14} for the connections with dibaric algebras and population genetics, the
``chicken population'' algebras \cite{LLR14,DOR14,RM14} and the constrained algebras of
\cite{DOR16}. Varro generalised the bisexual construction to gonosomal algebras, which
model sex determination systems \cite{Var16}. Further models include mosquito
populations \cite{RV19} and pulse processes in networks \cite{BBV20}.

\subsection{Classification in low dimension}\label{sec:classification}

\subsubsection{Dimension two and three}
Two-dimensional evolution algebras over $\mathbb C$ were classified by Casas, Ladra, Omirov
and Rozikov \cite{CLOR14}, who also described an algorithm deciding isomorphism of evolution
algebras; real two-dimensional algebras were treated by Murodov \cite{Mur} and by Bekbaev
\cite{Bek}. A geometric
invariant, the ``square'' of a two-dimensional evolution algebra, was introduced in
\cite{CGMMS21}, where identities of degree at most four, derivations, automorphism group
schemes and associative representations of all two-dimensional algebras are determined.

The three-dimensional case is substantially harder. Cabrera Casado, Siles Molina and
Velasco classified three-dimensional evolution algebras over fields of characteristic
different from $2$ containing roots of orders $2$, $3$ and $7$ \cite{CSV17}; the
classification uses the graph, the dimension of $A^2$ and the annihilator, and produces a
long list including several parametric families. An alternative approach, based on
chains of two-dimensional algebras and on the annihilator and socle stabilising
indices, was developed in \cite{CCGMM21}, and a classification adapted to dimensions two
and three over arbitrary fields appears in \cite{CV19}. Simple evolution algebras of
dimensions two and three were classified over arbitrary fields in \cite{CMMT25s}, using the
graph, moduli sets, inductive limit groups and the Zariski topology.

\subsubsection{Higher dimensions and special classes}
Four-dimensional perfect non-simple evolution algebras were classified in
\cite{CKS-4dim}, with the isomorphism problem completed in \cite{BCS20}. Evolution algebras with
one-dimensional square were classified by means of inner product spaces \cite{BMMS}: if
$\dim A^2=1$ and $a$ generates $A^2$, then $xy=\langle x,y\rangle a$ for a symmetric
bilinear form, and three broad classes appear according to the position of $a$. Calder\'on,
Fern\'andez Ouaridi and Kaygorodov \cite{CFK24} attached to every non-degenerate evolution
algebra an invariant consisting of an ordered sequence of lower-dimensional evolution
algebras from a specific family; it yields a classification method for non-degenerate
evolution algebras, carried out up to dimension three, and descriptions of derivations
and of the variety of evolution algebras with square of dimension at most one. Power-associative evolution algebras are Jordan algebras and
admit a Wedderburn decomposition; they are classified up to dimension six \cite{OS20}.

\subsubsection{Isotopy}
Isotopisms of evolution algebras correspond to mutations of genotypes. Falc\'on, Falc\'on
and N\'u\~nez showed, using computational algebraic geometry, that there are exactly four
(strong) isotopism classes of two-dimensional evolution algebras over any field \cite{FFN17},
and determined the isotopism classes of three-dimensional ones \cite{FFN18}. Ternary
derivations and the group scheme of ternary automorphisms of perfect evolution algebras
were computed in \cite{MRS26}. 

\subsubsection{Counting}
Isomorphism classes of perfect $n$-dimensional evolution algebras correspond to orbits of
the semidirect product $S_n\ltimes(\kk^\times)^n$ acting on invertible structure
matrices. Wei and Zou \cite{WZ23} turned this into a counting problem over finite fields,
derived a general counting formula and obtained explicit formulas for $n\le4$.

\subsubsection{Recognising evolution algebras}
Deciding whether a given algebra is an evolution algebra amounts to deciding whether $n$
symmetric matrices describing the product are simultaneously diagonalisable by congruence
(the SDC problem) \cite{BMV20}; necessary and sufficient conditions for simultaneous
orthogonalisation of families of inner products over arbitrary fields were given in
\cite{CMGS23}. In \cite{BMV20} it is also shown that classical genetic algebras of Mendelian
and auto-tetraploid inheritance are not evolution algebras but are limits of evolution
algebras. Imomkulov and Rozikov \cite{IR} approximated an arbitrary algebra by the family
of evolution algebras given by the Jacobians of its evolution operator.

\subsection{Structure theory}\label{sec:structure}

\subsubsection{Subalgebras, ideals and decompositions}
Evolution subalgebras and evolution ideals (those having a natural basis that extends to a
natural basis of the algebra) behave much better than arbitrary subalgebras and ideals.
Cabrera Casado, Siles Molina and Velasco developed the theory in arbitrary dimension
\cite{CSV16}: they described ideals generated by one element, characterised simple
evolution algebras, proved that a non-degenerate evolution algebra is irreducible if and
only if its graph is connected, and established existence and uniqueness of a
decomposition of a non-degenerate algebra into irreducible components (the
\emph{fragmentation process}); uniqueness fails in the degenerate case. Basic ideals
were introduced and studied in \cite{CKS19}. Boudi, Cabrera Casado and Siles Molina
\cite{BCS22} introduced the \emph{evolution rank}, proved that in a non-degenerate
algebra every family of natural orthogonal vectors extends to a natural basis, showed that
ideals of perfect evolution algebras are basic, and located nilpotent elements of order
three through principal minors of the structure matrix.

The relation between ideals and the graph was made precise in \cite{CMMT25i}: there is a
monotone Galois connection between absorption ideals and hereditary saturated subsets of
vertices, which describes maximal ideals and absorption ideals combinatorially and
characterises finitely generated evolution algebras under suitable graph conditions. The
intersection $\mathfrak M(A)$ of all maximal ideals and a subdirect-product structure
theorem were studied in \cite{CMMT-ss}. The absorption radical can be read off from the acyclic
vertices of the graph \cite{CRT24}, and the reductive radical was introduced in
\cite{CMMT25r}. Cabrera Casado et al.\ \cite{CCGMMR26} related non-degeneracy, zero
annihilator, the absorption property, von Neumann regularity and primeness to properties
of the family of all graphs of $A$, showed that primeness is equivalent (under
semiprimeness) to downward directedness of the graphs, proved that semiprimeness and
perfection cannot be characterised combinatorially, and described the centroid as the
functions constant along connected components. Peirce decompositions relative to
idempotents were studied by Paniello \cite{Pan23}.  

Arbitrary subalgebras of evolution algebras need not be evolution algebras
\cite{CKO19}. Camacho, Khudoyberdiyev and Omirov classified the nilpotent evolution algebras
all of whose subalgebras are evolution subalgebras with extendable natural bases, and
formulated conjectures for the non-nilpotent case \cite{CKO19}; two of them were proved by
Garc\'ia-Mart\'inez and P\'erez-Rodr\'iguez \cite{GMPR26}. Ladra and P\'erez-Rodr\'iguez proved
that subalgebras of perfect (regular) evolution algebras are evolution algebras, and
characterised the existence of subalgebras of codimension one \cite{LPR25r}. The
subalgebra lattice was investigated in \cite{LPPR25} (distributivity in the nilpotent
case, modularity of solvable algebras with maximal index of solvability) and in
\cite{LPR26m}, where modular evolution algebras are fully characterised in the nilpotent case
and among supersolvable regular algebras. A Frattini theory, based on the supersolvable
nilradical, was developed in \cite{LPR25F}. Order-theoretic structures on substructures and
the socle of an evolution algebra were studied by Gonz\'alez Nevado \cite{GN25}.

\subsubsection{Radicals and spectra}
Velasco described the maximal modular ideals and the Jacobson radical of an evolution
algebra and introduced two notions of spectrum of an element, computed via the
eigenvalues of suitable matrices built from the structure constants \cite{Vel19}. Analytic
aspects (Banach algebra norms, continuity of the evolution operator) were studied by Mellon
and Velasco \cite{MV19}.

\subsubsection{Idempotents and solvability}
The idempotents of an evolution algebra are the solutions of \eqref{eq:idemp}. For algebras
coming from biological models they are the equilibrium states of the population and were
computed in many concrete cases, see e.g.\ \cite{LR13,DOR16,Nar22}. Garc\'ia-Mart\'inez and
P\'erez-Rodr\'iguez \cite{GMPR26} analysed the solution set of the generic system by
elementary algebraic geometry: every perfect complex evolution algebra has a non-zero
idempotent. They conjectured that a complex evolution algebra is solvable if and only if
it has no non-zero idempotents. One implication is clear, since an idempotent lies in every
term of the derived series. Hu and Wen \cite{HW} found a three-dimensional counterexample
to the other implication (an isomorphic algebra appears in \cite[Remark~3.9]{CFV}, as
an evolution envelope of a two-dimensional algebra); it is the exceptional member of a one-parameter family, and they
proved the equivalence whenever $A^{(\infty)}$ is itself an evolution algebra. Resolubility of
evolution algebras was also studied in \cite{FGN22b}.

\subsubsection{The evolution operator}
The evolution operator depends on the natural basis. Its behaviour, the set of all
evolution operators of an algebra, and the algebras whose evolution operator is an
endomorphism were studied by Fern\'andez-Ternero, G\'omez-Sousa and N\'u\~nez-Vald\'es
\cite{FGN22,FGN-set}.

\subsection{Nilpotency}\label{sec:nilpotency}

Nilpotency was the first structural property to be understood. Casas, Ladra, Omirov and
Rozikov proved that nil, right nilpotent and nilpotent finite-dimensional evolution algebras
are exactly those whose structure matrix is strictly upper triangular in a suitable natural
basis \cite{CLOR14}. Camacho, G\'omez, Omirov and Turdibaev characterised nilpotency by
matrix properties and proved that the maximal nilpotency index of an $n$-dimensional
evolution algebra is $2^{n-1}+1$ \cite{CGOT13a}; algebras with maximal index were
characterised and classified in \cite{CLOR13}, where wide classes with index $2^{n-s}+1$ are
also constructed. Elduque and Labra gave the conceptual statement: an evolution algebra is
nilpotent if and only if $\Gamma(A,B)$ has no oriented cycles \cite{EL15}.

Classification of some classes of nilpotent evolution algebras proceeds by annihilator extensions
\cite{HA15}. Hegazi and Abdelwahab classified nilpotent evolution algebras up to dimension
four over arbitrary fields \cite{HA15} and in dimension five over algebraically closed fields
\cite{HA-5}. Elduque and Labra \cite{EL16} pointed out a subtle gap in the extension
method (a natural basis of a quotient need not lift to a natural basis), introduced the
\emph{type} and further invariants attached to the upper annihilating series, constructed
families of nilpotent evolution algebras from non-degenerate symmetric bilinear forms and
commuting symmetric endomorphisms, and completed the classification up to dimension five
over algebraically closed fields of characteristic different from two. Derivations, automorphisms, local and
$2$-local derivations of nilpotent evolution algebras of maximal nilindex were determined
in \cite{MKOQ19}, the group $\exp(\Der(E))$ was shown to be a normal subgroup of $\Aut(E)$ of
computable index in \cite{MKQ24}, and tensor products of nilpotent evolution algebras were
studied in \cite{Qar25}. Rota--Baxter operators of weights $0$ and $1$ on nilpotent algebras
of maximal nilindex were classified in \cite{QMK}, and local Rota--Baxter operators of
weight zero in \cite{QMK-loc}. 
Formal deformations of evolution algebras and the
degenerations among nilpotent evolution algebras of dimension at most four were
determined by Makhlouf and P\'erez-Rodr\'iguez \cite{MPR}; see \cite{KKP24} for a survey
of the geometric classification of non-associative algebras.

In infinite dimension the situation changes: Tian and Zou constructed finitely generated
evolution algebras (over rings with nilpotent elements) which are nil but not nilpotent
\cite{TZ14}, and nil and nilpotent are not equivalent for Hilbert evolution algebras
\cite{CRV24}.

In finite dimension, the word ``nil'' in Theorem~\ref{thm:nilp} has to be understood with
respect to the principal powers $x^{k+1}=x^kx$: for the plenary powers $x^{[1]}=x$,
$x^{[m+1]}=(x^{[m]})^2$, nil and nilpotent are not equivalent even in finite dimension, as
the following example, based on the evolution envelopes of \cite{CFV}, shows.

\begin{proposition}\label{prop:plenary}
Over every field $\kk$ there is a $15$-dimensional evolution algebra $E$ which is not
nilpotent and satisfies $(x^2)^2=0$, that is $x^{[3]}=0$, for every $x\in E$. In particular,
$E$ is nil of bounded index with respect to plenary powers, and, by
Theorem~\ref{thm:nilp}, it is not nil with respect to principal powers.
\end{proposition}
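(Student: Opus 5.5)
The plan is to reduce the identity $(x^2)^2=0$ to the single structural condition $(E^2)^2=0$, write down a small non-nilpotent example satisfying it, and then pad the dimension up to $15$. I will not try to reproduce the evolution-envelope construction of \cite{CFV}, which presumably explains the specific number $15$; I only need some $15$-dimensional algebra with the stated properties.

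\emph{Reduction.} If $E^2E^2=0$, then for every $x\in E$ we have $x^2\in E^2$, so $(x^2)^2\in E^2E^2=0$. It therefore suffices to find a non-nilpotent evolution algebra $E$ with $(E^2)^2=0$. By Theorem~\ref{thm:nilp}, non-nilpotency can be certified by an oriented cycle, for example a loop, in the graph of one natural basis.

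\emph{A two-dimensional core.} Let $A$ have natural basis $e_1,e_2$ with
\[
e_1^2=e_1+e_2,\qquad e_2^2=-(e_1+e_2).
\]
This definition makes sense over every field; in characteristic $2$ it reads $e_1^2=e_2^2=e_1+e_2$. Set $u=e_1+e_2$. Then $A^2=\kk u$, and, since $e_1e_2=0$,
\[
u^2=e_1^2+e_2^2=0 .
\]
Hence $(A^2)^2=0$, and concretely $x^2=(x_1^2-x_2^2)u$ and $(x^2)^2=0$ for $x=x_1e_1+x_2e_2$. The graph $\Gamma(A,\{e_1,e_2\})$ has a loop at vertex $1$ (and at $2$), so $A$ is not nilpotent by Theorem~\ref{thm:nilp}.

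\emph{Padding to dimension $15$.} Let $E=A\oplus Z$, where $Z$ has natural basis $e_3,\dots,e_{15}$ with $e_i^2=0$ and all products between $A$ and $Z$ are zero. Then $e_1,\dots,e_{15}$ is a natural basis of $E$, and $E^2=A^2$, so $(E^2)^2=0$. The graph of this basis still contains the loop at $1$, so $E$ is not nilpotent.

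\emph{Consequences.} The identity $x^{[3]}=(x^2)^2=0$ holds for every $x\in E$, so $E$ is plenary-nil of bounded index. Since $E$ is not nilpotent, Theorem~\ref{thm:nilp} shows that $E$ is not nil with respect to principal powers. As a direct check, $x^3=x^2x=(x_1^2-x_2^2)(x_1-x_2)u$, which is generically non-zero, and likewise for higher principal powers.

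There is no real obstacle. The only points needing care are that the cross term $e_1e_2$ vanishes by the definition of a natural basis, and that the construction is uniform in the characteristic, including characteristic $2$.
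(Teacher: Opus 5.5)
Your proof is correct, and it takes a different and much more economical route than the paper. Both arguments use the same reduction: $(E^2)^2=0$ forces $(x^2)^2=0$ for every $x$. So what is needed is a non-nilpotent evolution algebra with $E^{(2)}=0$.

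The paper obtains one from Suttles' five-dimensional commutative algebra $S$, which is non-nilpotent with $S^2S^2=0$. It forms the evolution envelope of \cite{CFV}, with one basis vector for each of the $5+10$ linear forms $\lambda_i,\lambda_{ij}$; this is where the number $15$ comes from. Non-nilpotency is inherited from the subalgebra $\iota(S)$, and $E^2\subseteq\iota(S^2)$ gives $(E^2)^2=0$.

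You observe instead that the two-dimensional algebra $e_1^2=e_1+e_2$, $e_2^2=-(e_1+e_2)$ already suffices. Here $A^2=\kk u$ with $u^2=0$ and $ue_1=u$, so $A^k=\kk u$ for all $k\ge2$, which confirms your loop argument via Theorem~\ref{thm:nilp} directly. Padding by a $13$-dimensional zero algebra then reaches dimension $15$ without changing $E^k$ for $k\ge2$.

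Your route is self-contained, does not depend on \cite{CFV} or on Suttles' algebra, and is uniform in the characteristic. It also shows that the phenomenon already occurs in dimension $2$, which is minimal. Indeed, a one-dimensional algebra $e^2=\alpha e$ is either nilpotent ($\alpha=0$) or has $(e^2)^2=\alpha^3e\neq0$.

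For the statement as written, the paper's construction gives nothing extra. Its value is that it illustrates the principle stated right after the proof: the evolution envelope of any commutative algebra that is plenary-nil of bounded index but not nilpotent inherits both properties. It also ties the example to a classical nilalgebra.
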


\begin{proof}
Let $S$ be the commutative algebra of Suttles \cite{Sut72}, with basis $a_1,\dots,a_5$
and non-zero products
$a_1a_2=a_2a_4=-a_1a_5=a_3$, $a_1a_3=a_4$, $a_2a_3=a_5$ (and the symmetric ones). Then
$S^2=\mathrm{span}\{a_3,a_4,a_5\}$ and $S^2S^2=0$. Moreover, $S$ is not nilpotent: if
$a_3\in S^k$, then $a_4=a_1a_3$ and $a_5=a_2a_3$ lie in $S^{k+1}$ and $a_3=a_2a_4\in S^{k+2}$;
since $a_3\in S^2$, we get $a_3\in S^k$ for all $k\ge2$.

We use the evolution envelope of \cite{CFV} in the following explicit form, valid in every
characteristic. Write $m_{ij}=a_ia_j=m_{ji}$ and, for $s=\sum_is_ia_i$, consider the linear
forms $\lambda_i(s)=s_i$ $(1\le i\le5)$ and $\lambda_{ij}(s)=s_i+s_j$ $(1\le i<j\le5)$. Since
$(s_i+s_j)(t_i+t_j)-s_it_i-s_jt_j=s_it_j+s_jt_i$, we have, for all $s,t\in S$,
\[
st=\sum_i\lambda_i(s)\lambda_i(t)\,v_i+\sum_{i<j}\lambda_{ij}(s)\lambda_{ij}(t)\,m_{ij},
\qquad v_i=m_{ii}-\sum_{j\neq i}m_{ij}.
\]
Let $E$ be the evolution algebra with natural basis $\{f_i\}_{i}\cup\{f_{ij}\}_{i<j}$, let
$\iota\colon S\to E$ be the injective linear map
$\iota(s)=\sum_i\lambda_i(s)f_i+\sum_{i<j}\lambda_{ij}(s)f_{ij}$, and put $f_i^2=\iota(v_i)$,
$f_{ij}^2=\iota(m_{ij})$. The displayed identity gives $\iota(s)\iota(t)=\iota(st)$, so
$\iota(S)\cong S$ is a subalgebra of $E$ (in fact an ideal, as $f_i\iota(s)=\lambda_i(s)f_i^2$
and $f_{ij}\iota(s)=\lambda_{ij}(s)f_{ij}^2$ lie in $\iota(S)$). Since $\iota(S)$ is not
nilpotent, neither is $E$. Finally, $E^2$ is spanned by the elements $f_i^2,f_{ij}^2$, which
lie in $\iota(S^2)$; hence for $x\in E$ we have $x^2=\iota(s)$ with $s\in S^2$, and
$(x^2)^2=\iota(s^2)=0$ because $S^2S^2=0$.
\end{proof}

More generally, the same argument shows that if a commutative algebra $S$ is nil of
bounded index with respect to plenary powers but not nilpotent (as Suttles' algebra, or
ad-hoc examples in characteristic $2$), then so is its evolution envelope, since $x^2\in S$
for every element $x$ of the envelope. Compare with Question~\ref{q:nil}.

\subsection{Automorphisms and derivations}\label{sec:aut}

\subsubsection{Automorphism groups}
Elduque and Labra proved that the automorphism group of a perfect evolution algebra is
finite \cite{EL15}. In \cite{EL21} they showed that the affine group scheme of automorphisms of a
perfect algebra sits in an exact sequence
\begin{equation}\label{eq:EL}
1\longrightarrow \mathbf D\longrightarrow \AutS(A)\longrightarrow \mathbf S,
\end{equation}
where $\mathbf D$ is a finite diagonalisable group scheme (the diagonal automorphisms with
respect to the natural basis) and $\mathbf S$ is a constant group scheme of permutations of
the vertices, both depending only on the graph. Costoya, Ligouras, Tocino and Viruel
\cite{CLTV22} constructed a faithful functor from finite simple graphs to perfect evolution
algebras and proved that every finite group is the automorphism group of infinitely many
perfect evolution algebras over any field (``evolution algebras are universally finite''),
and that every constant finite group scheme is the automorphism group scheme of a perfect
evolution algebra. Sriwongsa and Zou \cite{SZ22} gave another proof of universality,
classified the $n$-dimensional perfect algebras with $\Aut(A)\cong S_n$ and those with
maximal diagonal automorphism group. Costoya, Mu\~noz, Tocino and Viruel introduced Cayley
evolution algebras and showed that every finite group is the automorphism group of an
absolutely simple Cayley evolution algebra over every field with sufficiently many
elements \cite{CMTV23}; over arbitrary fields see Theorem~\ref{thm:simpleuniv} below. Costoya, Mayorga and Viruel \cite{CMV25} studied which permutation
representations arise from \eqref{eq:EL}: for $k\ge4$ and $n$ large, proper $k$-transitive
subgroups of $S_n$ never occur, whereas every permutation representation is realised on
the set of natural idempotents of a suitable perfect algebra. Molinier \cite{Mol} proved that
every positive rational number is of the form $|\Aut(A)|/\dim A$ for a perfect evolution
algebra $A$.

The following construction shows that simplicity can be achieved over every field, which
proves a conjecture of an earlier version of this text.

\begin{theorem}\label{thm:simpleuniv}
Let $\Delta$ be a finite strongly connected digraph without loops, with vertex set $V$,
$|V|=m$. For every field $\kk$, let $A_\Delta$ be the evolution $\kk$-algebra with natural
basis $\{e_v,e'_v\}_{v\in V}$ and
\[
e_v^2=e_v+\sum_{v\to w}e_w+e'_v,\qquad (e'_v)^2=e_v\qquad(v\in V),
\]
the sum running over the arrows of $\Delta$ starting at $v$. Then $A_\Delta$ is a simple
evolution algebra of dimension $2m$, it remains simple after every extension of scalars,
and $\Aut(A_\Delta)\cong\Aut(\Delta)$. Consequently, every finite group is the automorphism
group of a simple evolution algebra over every field.
\end{theorem}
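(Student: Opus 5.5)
The plan is to check the three claims (simplicity, absolute simplicity, and the automorphism group) directly from the structure constants, using Theorem~\ref{thm:frag} for simplicity and Theorem~\ref{thm:unique} for automorphisms.

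\textbf{Perfectness.} First I show that $A_\Delta$ is perfect over any field by exhibiting the basis inside $A_\Delta^2$:
\[
e_v=(e'_v)^2\in A_\Delta^2,\qquad e'_v=e_v^2-e_v-\sum_{v\to w}e_w\in A_\Delta^2 .
\]
This argument only uses integer coefficients $0,\pm1$, so it does not depend on the field. Hence $A_\Delta\otimes_\kk K$ is perfect for every extension $K/\kk$.

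\textbf{Strong connectivity.} The graph $\Gamma=\Gamma(A_\Delta,B)$ has the following arrows:
\begin{itemize}
\item a loop at each $e_v$;
\item $e_v\to e_w$ for each arrow $v\to w$ of $\Delta$;
\item $e_v\to e'_v$ and $e'_v\to e_v$.
\end{itemize}
Since $\Delta$ is strongly connected and each $e'_v$ is joined both ways to $e_v$, $\Gamma$ is strongly connected. This graph depends only on which structure constants are nonzero, namely entries $\pm1$ or $2$. I need to be careful only with the diagonal: the coefficient of $e_v$ in $e_v^2$ is $1$, because $\Delta$ has no loops and so no $w=v$ occurs in the sum. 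So $\Gamma$ is the same after any scalar extension. Theorem~\ref{thm:frag} then gives that $A_\Delta$ is simple and stays simple after every extension of scalars. The dimension is clearly $2m$.

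\textbf{Automorphisms.} By Theorem~\ref{thm:unique} every automorphism $\varphi$ is monomial and induces a graph automorphism of $\Gamma$. The vertices $e_v$ carry loops and the $e'_v$ do not, so $\varphi$ permutes $\{e_v\}$ and $\{e'_v\}$ separately. Write $\varphi(e_v)=t_ve_{\pi(v)}$ and $\varphi(e'_v)=s_ve'_{\rho(v)}$. Restricting the graph automorphism to the loop vertices (removing loops) shows that $\pi\in\Aut(\Delta)$. The arrow $e_v\to e'_v$ forces $\rho=\pi$, since $e_v$ is the only in-neighbour of $e'_v$.

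Next I compare coefficients in $\varphi(e_v)^2=\varphi(e_v^2)$:
\begin{itemize}
\item the coefficient of $e_{\pi(v)}$ gives $t_v^2=t_v$, hence $t_v=1$;
\item the coefficients of $e_{\pi(w)}$ and of $e'_{\pi(v)}$ then give $t_w=t_v^2=1$ and $s_v=1$.
\end{itemize}
The relation $\varphi(e'_v)^2=\varphi(e_v)$ is then consistent, since $s_v^2=t_v$. Conversely, every $\pi\in\Aut(\Delta)$ defines an automorphism via $e_v\mapsto e_{\pi v}$, $e'_v\mapsto e'_{\pi v}$. Thus $\Aut(A_\Delta)\cong\Aut(\Delta)$. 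The step needing the most care is this one, specifically justifying that the induced permutation must respect the loop/non-loop partition and the pairing $v\leftrightarrow v'$. Once that is settled, the scalar equations are immediate.

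\textbf{Consequence.} For the final sentence, I invoke Frucht's theorem: every finite group $G$ is $\Aut$ of a finite connected simple graph. Replacing each edge by a pair of opposite arrows gives a loopless, strongly connected digraph $\Delta$ with the same automorphism group. For the trivial group, take the one-vertex digraph. Then $A_\Delta$ is a simple evolution algebra over $\kk$ with $\Aut(A_\Delta)\cong G$.
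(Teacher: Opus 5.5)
Your proof is correct and follows essentially the same route as the paper: perfectness plus strong connectivity of the unchanged graph via Theorem~\ref{thm:frag}, then monomiality from Theorem~\ref{thm:unique}, preservation of the loop vertices and of the pairing $v\leftrightarrow v'$, and the scalar equations $t_v^2=t_v$, $s_v=t_v^2$, and finally Frucht's theorem. The only differences are cosmetic. You prove perfectness by writing each basis vector explicitly in $A_\Delta^2$ instead of computing $\det M=(-1)^m$. Also, the structure constants are all $0$ or $1$, not ``$\pm1$ or $2$''; this does not affect the argument.
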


\begin{proof}
Ordering the basis as $(e_v)_{v\in V},(e'_v)_{v\in V}$, the structure matrix is
$M=\left(\begin{smallmatrix} I+N & I\\ I & 0\end{smallmatrix}\right)$, where $N$ is the
adjacency matrix of $\Delta$. Exchanging the two block rows gives the block lower triangular
matrix $\left(\begin{smallmatrix} I & 0\\ I+N & I\end{smallmatrix}\right)$, so
$\det M=(-1)^m\neq0$ over every field, and $A_\Delta$ is perfect. Its graph consists of
$\Delta$, a loop at every $e_v$ and the $2$-cycles $e_v\rightleftarrows e'_v$; it is strongly
connected, so $A_\Delta$ is simple by Theorem~\ref{thm:frag}. Extending scalars does not
change $M$, hence $A_\Delta\otimes_\kk L$ is simple for every field extension $L/\kk$.

By Theorem~\ref{thm:unique}, every automorphism has the form $\varphi(e_x)=t_xe_{\sigma(x)}$
for a permutation $\sigma$ of the basis indices and $t_x\in\kk^\times$, and
$\varphi(e_x)^2=\varphi(e_x^2)$ amounts to
$t_x^2\,\omega_{\sigma x,\sigma y}=t_y\,\omega_{xy}$ for all $x,y$. Taking $x=y$ shows
that $\sigma$ maps vertices with a loop to vertices with a loop and vice versa, so $\sigma$
preserves $\{e_v\}$ and $\{e'_v\}$, and that $t_v^2=t_v$, i.e.\ $t_v=1$, for every $v\in V$.
For $x=v$, $y=w$ in $V$ we get $\omega_{\sigma v,\sigma w}=\omega_{vw}$, so $\tau=\sigma|_V$
is an automorphism of $\Delta$. For $x=v$, $y=v'$ we get $\omega_{\sigma v,\sigma v'}=t_{v'}\ne0$;
the only primed vertex that receives an arrow from $\sigma(v)=\tau(v)$ is $\tau(v)'$, so
$\sigma(v')=\tau(v)'$ and $t_{v'}=1$. Hence $\varphi$ is the permutation automorphism
$e_v\mapsto e_{\tau v}$, $e'_v\mapsto e'_{\tau v}$. Conversely every $\tau\in\Aut(\Delta)$
preserves the structure constants and so defines such an automorphism. Thus
$\Aut(A_\Delta)\cong\Aut(\Delta)$.

For the last statement, recall that every finite group $G$ is the automorphism group of a
finite simple graph $\Gamma$ \cite{Fru39}, which we may take connected (otherwise replace it by
its complement). The symmetric digraph $\Delta$ with an arrow in each direction for every
edge of $\Gamma$ is strongly connected and $\Aut(\Delta)=\Aut(\Gamma)\cong G$.
\end{proof}

\begin{corollary}\label{cor:growthall}
For every field $\kk$ and all positive integers $p,q$ there is a simple evolution
$\kk$-algebra $A$ with $|\Aut(A)|/\dim A=p/q$.
\end{corollary}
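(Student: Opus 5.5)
The plan is to deduce the corollary from Theorem~\ref{thm:simpleuniv}. That theorem gives, for every finite strongly connected loopless digraph $\Delta$ on $m$ vertices, a simple evolution $\kk$-algebra $A_\Delta$ with $\dim A_\Delta=2m$ and $\Aut(A_\Delta)\cong\Aut(\Delta)$. It therefore suffices to construct, for given $p,q$, such a digraph with
\[
q\,|\Aut(\Delta)|=2p\,m .
\]
Since the ratio must be adjustable both up and down, I would glue together three kinds of gadgets around a central hub vertex $h$:
\begin{itemize}
\item $k$ symmetric leaves $\ell_1,\dots,\ell_k$, each with arrows $h\rightleftarrows\ell_i$, contributing a factor $S_k$;
\item an oriented $n$-cycle $c_1\to c_2\to\dots\to c_n\to c_1$, with arrows $h\rightleftarrows c_j$ for all $j$, contributing a factor $\mathbb Z_n$;
\item a symmetric path $h=h_0\rightleftarrows h_1\rightleftarrows\dots\rightleftarrows h_s$, a rigid ``tail'' used only to tune the vertex count.
\end{itemize}
This $\Delta$ is clearly loopless and strongly connected, and has $m=1+k+n+s$ vertices. (I count the hub separately, so the arithmetic below uses $1+s$ in place of $s$.)

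The first step is to show that $\Aut(\Delta)\cong S_k\times\mathbb Z_n$. The hub is fixed because, for $k,n\ge3$ and $n+k$ larger than $2$, it is the unique vertex of maximal degree. The vertices then split by their local structure:
\begin{itemize}
\item the leaves have total degree $2$ and only the hub as neighbour;
\item the cycle vertices have in- and out-degree $2$ and are adjacent to the hub;
\item the path vertices are not adjacent to the hub except $h_1$, and $h_1$ is distinguished from the leaves because it has a further neighbour $h_2$ (when $s\ge2$) or is the unique path vertex.
\end{itemize}
Any automorphism therefore preserves these three classes. On the path it is the identity, since $h_1,\dots,h_s$ is a path hanging off a fixed vertex. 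On the leaves it acts by an arbitrary permutation. On the cycle it acts as an automorphism of an oriented cycle, that is, a rotation. All of these combinations are realised, so $|\Aut(\Delta)|=k!\,n$. I expect this degree bookkeeping to be the only slightly delicate point. If the degree argument gets awkward I would simply take $s\ge2$ and $k\ge3$, $n\ge3$, with $k\ne n$, to separate all classes by (in-degree, out-degree, adjacency to $h$).

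The second step is arithmetic. We need $q\,k!\,n=2p(1+k+n+s)$, that is,
\[
n\,(k!\,q-2p)=2p\,(1+k+s).
\]
Choose $k\ge3$ with $D:=k!\,q-2p>0$. Then choose $s$ with $s\equiv-(1+k)\pmod D$ and $s$ large, so that $n:=2p(1+k+s)/D$ is an integer satisfying $n\ge3$ and $n\neq k$. Large $s$ also makes $n$ large, which helps the degree separation.

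With this choice $|\Aut(A_\Delta)|/\dim A_\Delta=k!\,n/(2m)=p/q$. By Theorem~\ref{thm:simpleuniv}, $A_\Delta$ is simple over every field, which proves the corollary.
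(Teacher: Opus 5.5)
Your proof is correct. It follows the paper's overall strategy: apply Theorem~\ref{thm:simpleuniv} to a hub digraph whose automorphism group is a product of local symmetries, with a rigid tail that adjusts the vertex count. The gadgets and the arithmetic differ.

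\begin{itemize}
\item \emph{The paper's choice.} It attaches to $h$ two oriented cycles of distinct lengths $a=2pN$ and $b=2pN+1$. It pads with a directed return path $h\to z_1\to\dots\to z_s\to h$, with $s$ chosen so that the vertex count is exactly $qNb$. Then $|\Aut(\Delta)|=ab$, the factor $b$ cancels, and $p/q$ comes out in closed form with the abelian group $\mathbb Z_a\times\mathbb Z_b$.
\item \emph{Your choice.} The twin leaves act as a multiplier $k!$, large enough that $D=k!\,q-2p>0$. You then solve the linear equation $nD=2p(1+k+s)$ through a congruence on $s$. This is equally elementary, a little less explicit, and yields the non-abelian group $S_k\times\mathbb Z_n$.
\end{itemize}

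One step of your rigidity argument needs repair.
\begin{itemize}
\item The triple (in-degree, out-degree, adjacency to $h$) does \emph{not} separate $h_1$ from the cycle vertices: both have in- and out-degree $2$ and are adjacent to $h$.
\item For $s=1$, the vertex $h_1$ is simply another leaf, so your remark ``or is the unique path vertex'' is false; you would get $S_{k+1}\times\mathbb Z_n$.
\end{itemize}
With $s\ge2$, which your choice of large $s$ guarantees, the fix is immediate. We have $h_1\rightleftarrows h_2$ with $h_2$ not adjacent to $h$. For $n\ge3$, a cycle vertex $c_j$ has distinct non-hub in- and out-neighbours $c_{j-1}\neq c_{j+1}$, both adjacent to $h$. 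So no automorphism sends $h_1$ to a cycle vertex, $h_1$ is fixed, and the tail is fixed pointwise by induction. The condition $k\neq n$ is not needed.
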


\begin{proof}
Choose $N\ge1$ such that $s=qN(2pN+1)-1-4pN-1\ge1$, and put $a=2pN$, $b=2pN+1$. Let $\Delta$
be the digraph with vertices $h$, $x_i$ $(i\in\mathbb Z_a)$, $y_j$ $(j\in\mathbb Z_b)$ and
$z_1,\dots,z_s$, and arrows $x_i\to x_{i+1}$, $y_j\to y_{j+1}$, $h\rightleftarrows x_i$,
$h\rightleftarrows y_j$, $h\to z_1\to z_2\to\dots\to z_s\to h$. It is strongly connected and has
$1+a+b+s=qNb$ vertices. The vertex $h$ is the only one of out-degree $a+b+1\ge3$, the $z_l$
are the only vertices of out-degree $1$ (and are fixed, since $z_1$ is the only one of them
receiving an arrow from $h$), and removing these vertices leaves the disjoint union of an
oriented $a$-cycle and an oriented $b$-cycle with $a\ne b$. Since $h$ is joined to all the
$x_i,y_j$ in both directions, every pair of rotations of the two cycles is an automorphism,
and $\Aut(\Delta)\cong\mathbb Z_a\times\mathbb Z_b$. By Theorem~\ref{thm:simpleuniv},
$A=A_\Delta$ is simple with $|\Aut(A)|/\dim A=ab/(2qNb)=p/q$.
\end{proof}

Thus, Theorem~\ref{thm:growth}(3) below holds over arbitrary fields and answers the analogue of Question~\ref{q:growth} for simple evolution algebras.

\begin{remark}\label{rem:incidence}
There is a variant of Theorem~\ref{thm:simpleuniv} based on cubic graphs. By \cite{Fru49},
every finite group $G$ is the automorphism group of a connected $3$-regular graph $\Gamma$,
with $n$ vertices, $m=3n/2$ edges and incidence matrix $B$. Let $D$ be the symmetric
incidence digraph of $\Gamma$ (vertex set $V\sqcup E$, with arrows $v\to e$ and $e\to v$
whenever $v\in e$), and add a loop of weight $\lambda\in\kk^\times$ at every vertex, all other
arrows having weight $1$. The resulting structure matrix
$M(\lambda)=\left(\begin{smallmatrix}\lambda I_n & B\\ B^T&\lambda I_m\end{smallmatrix}\right)$
has determinant $f(\lambda)=\lambda^{m-n}\det(\lambda^2I_n-BB^T)$, a monic polynomial of degree
$n+m$. Since the vertices in $V$ have out-degree $4$ and those in $E$ have out-degree $3$, one
has $\Aut(D)\cong\Aut(\Gamma)$, and the argument of the proof above shows that the
corresponding algebra is simple with automorphism group $G$ whenever $f(\lambda)\ne0$. Such a
$\lambda\in\kk^\times$ exists as soon as $|\kk|>2n+1$; over small finite fields, however, it
may not exist: for the Petersen graph ($\Aut(\Gamma)\cong S_5$) the matrix $BB^T=3I+\mathrm{Adj}(\Gamma)$
has the eigenvalue $1$, so $f(\lambda)=0$ whenever $\lambda^2=1$, in particular for every
$\lambda\in\mathbb F_2^\times$ and every $\lambda\in\mathbb F_3^\times$. The pendant vertices
$e'_v$ in Theorem~\ref{thm:simpleuniv} avoid this problem, since there $\det M=\pm1$.
\end{remark}

For infinite-dimensional algebras, the group of automorphisms that are diagonal with
respect to a natural basis is an inverse limit of a diagram of groups indexed by the graph,
and can be a dyadic solenoid \cite{CCGMMR-sol}. Automorphisms of evolution algebras of graphs
were related to graph symmetries in \cite{RVH}. Hopf algebras representing $\AutS(A)$ for
two-dimensional algebras were computed and compared with universal associative
representations in \cite{CCGMMR25}; there it is shown that $\Aut(A)=1$ if and only if $A$ has
no faithful associative commutative representation.

\subsubsection{Derivations}
Camacho, G\'omez, Omirov and Turdibaev proved that $\Der(A)=0$ for complex evolution algebras with
non-singular structure matrix and described derivations when the rank is $n-1$
\cite{CGOT13b}. The exact sequence \eqref{eq:EL} shows that for perfect algebras $\Der(A)=0$ in
characteristic $0$ or $2$, while in other characteristics $\Der(A)$ is abelian and determined by
the graph \cite{EL21}. Derivations of evolution algebras of graphs were described in terms of
the twin partition of the graph; for twin-free graphs the derivation algebra vanishes
\cite{CCRR21}. Non-degenerate algebras and Volterra evolution algebras (whose structure matrices are
skew-symmetric, and which are connected to ergodicity of Volterra quadratic stochastic
operators) were treated in \cite{CCR23}, where loops of evolution algebras are introduced.
Biderivations of some evolution algebras were considered in \cite{Oub}.

\subsection{Graphs, random walks and combinatorics}\label{sec:graphs}

The graph $\Gamma(A,B)$ was introduced by Elduque and Labra \cite{EL15}; earlier,
Camacho et al.\ \cite{CGOT13a} formulated planarity of graphs in terms of evolution algebras,
and Tian \cite{Tian08} defined the evolution algebra of a graph. Systematic dictionaries
between evolution algebras and (pseudo)digraphs were developed by Ceballos, N\'u\~nez and
Tenorio \cite{CNT22,Ceb22}, by Gonz\'alez-L\'opez and N\'u\~nez \cite{GN20}, and in the
study of graphicable and S-graphicable algebras \cite{NRV}.

There are two natural evolution algebras attached to a finite graph $G$: $A(G)$, whose
structure matrix is the adjacency matrix, and $A_{RW}(G)$, whose structure matrix is the
transition matrix of the symmetric random walk. Cadavid, Rodi\~no Montoya and Rodr\'iguez
\cite{CRR20,CRR21} proved that they are always strongly isotopic, and that for non-singular
graphs they are isomorphic if and only if $G$ is regular or biregular; they conjectured the
same for singular graphs, and supporting families were given in \cite{CRRV}. The connection with Markov chains and
statistics was explored in \cite{LNRV}.

Tensor products of evolution algebras are evolution algebras, and the graph of the
product relative to the product basis is the categorical product of the graphs.
Inheritance of non-degeneracy, irreducibility, perfection and simplicity was studied in
\cite{CMMT23}, where four-dimensional perfect algebras that are tensor products of
two-dimensional ones are classified. Alshatnawi, Costoya and Viruel \cite{ACV} studied the
Grothendieck ring of finite-dimensional non-degenerate evolution algebras: they proved that
the \emph{balance} of the graph (the greatest common divisor of the balances of the cycles of the
underlying undirected graph) is independent of the natural basis, that indecomposable
factors of balance one cancel after tensoring while factors of larger balance produce
zero-divisors, and they analysed the subring generated by cyclic algebras.  Algebras
attached to permutations were studied in \cite{KOQ15,Nar22}. Giacomelli \cite{Gia}
introduced expander evolution algebras, whose graphs are expanders, and related the Cheeger
constant with simplicity, subalgebras, growth and spectral gaps.

\subsection{Infinite dimension, probability and dynamics}\label{sec:infinite}

Many results above hold in arbitrary dimension \cite{CSV16,CMMT25i}. Hilbert evolution
algebras, introduced by Vidal, Cadavid and Rodr\'iguez \cite{VCR23}, are Hilbert spaces with an
orthonormal natural basis and a continuous product; they are naturally associated with
discrete-time Markov chains with countable state space and with infinite graphs
\cite{VCR22}, and their nilpotency was analysed through weighted digraphs \cite{CRV24}.
Coletti, de Lima and Luiz \cite{CLL25} constructed infinite-dimensional genetic and evolution
algebras from Gibbs measures, with uncountable natural bases; infertility between
macroscopically different configurations produces a decomposition into fertile ideals.

Casas, Ladra and Rozikov introduced \emph{chains of evolution algebras}, families
$A^{[s,t]}$ whose structure matrices satisfy the Chapman--Kolmogorov equation
\cite{CLR11}. Many examples in dimensions two and three, and the time behaviour of the
baric property, of absolutely nilpotent elements and of idempotents, were studied in
\cite{RM13,Mur14,RM14,IV20,LM21}; flows of arbitrary finite-dimensional algebras
were introduced in \cite{LR17}. Continuous evolution algebras arising from ordinary
differential equations and flows on matrix Lie groups were introduced by Montaner and
Paniello \cite{MP23}. The quadratic evolution operator $x\mapsto x^2$ and its normalisations
define discrete dynamical systems whose fixed points are idempotents; these were analysed
for two-dimensional algebras \cite{RM13} and for many biological models
\cite{LR13,DOR16,RV19}.

\subsection{Embeddings, rings and further generalisations}\label{sec:embeddings}

Costoya, Fern\'andez Ouaridi and Viruel \cite{CFV} proved that every finite-dimensional
commutative algebra is an ideal of a finite-dimensional evolution algebra, determined the
smallest possible dimension of such an evolution algebra and, in characteristic different
from two, related it to simultaneous Waring decompositions of the squaring map; for unital
algebras the minimal dimension is the Waring rank of $x\mapsto x^2$, which they computed for
truncated polynomial algebras. It follows that the class of evolution algebras satisfies
no polynomial identities other than the consequences of commutativity (see
Remark~\ref{rem:identities} below). Evolution algebras over rings appear in \cite{TZ14}
and, for free modules of rank two over domains, in \cite{CMM23}, where quasiperfect algebras
and coloured graphs are introduced. Isomorphism conditions for algebra functors, including
the evolution algebra functor, were studied in \cite{GMMR23}. Further generalisations include
$S$-evolution algebras \cite{MQ21} and evolution coalgebras.

\medskip

This concludes the survey. We now turn to new observations and open problems.

\section{Open questions and conjectures}

\subsection{New observations}\label{sec:new}

In this section we collect several observations that were obtained while preparing the
list of problems. Some of them answer (parts of) questions raised in earlier versions of
the list, and they explain the present formulation of several questions below. Throughout,
$\mu_m(\kk)$ denotes the group of $m$-th roots of unity in $\kk$. Several statements were
first found, or are additionally supported, by computer experiments; whenever a statement
rests only on such experiments we say so explicitly, and it is \emph{not} claimed as a
theorem.

\subsubsection{Polynomial identities}

\begin{remark}\label{rem:identities}
The class of evolution algebras satisfies no polynomial identity other than the
consequences of commutativity. Indeed, let $p$ be a non-zero element of degree at most $d$
of the free commutative (non-associative) algebra $F$ on $x_1,\dots,x_m$. The quotient
$F/F_{>d}$ by the span of the monomials of degree greater than $d$ is a finite-dimensional
commutative algebra in which $p(x_1,\dots,x_m)=p\neq0$. Since every finite-dimensional
commutative algebra is an ideal (in particular, a subalgebra) of a finite-dimensional
evolution algebra \cite{CFV}, $p$ is not an identity of the class of evolution algebras.
Hence questions about identities are meaningful only for subclasses.
\end{remark}

\subsubsection{An evolution algebra with orthogonal symmetry}

\begin{proposition}\label{prop:On}
Let $n\ge1$ and let $A_n$ be the evolution algebra with natural basis
$\{e_1,\dots,e_n,u\}$ and
\[
e_i^2=u\quad(1\le i\le n),\qquad u^2=u .
\]
Then $A_n$ is non-degenerate and not perfect, and $\Aut(A_n)$ is the orthogonal group of
the symmetric bilinear form $b(x,y)=\sum_i x_iy_i$ on $V=\mathrm{span}\{e_1,\dots,e_n\}$.
If $\operatorname{char}\kk\neq2$, then $\Der(A_n)\cong\mathfrak{so}_n(\kk)$. In particular,
over an algebraically closed field of characteristic zero and for $n\ge3$, $\Der(A_n)$ is
a non-zero semisimple Lie algebra.
\end{proposition}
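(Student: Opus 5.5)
The plan is to compute everything from one explicit product formula. Write a general element as $x=v+\alpha u$ with $v\in V$ and $\alpha\in\kk$. Since $e_ie_j=0$ for $i\ne j$, $e_iu=0$, $e_i^2=u$ and $u^2=u$, bilinearity gives
\[
(v+\alpha u)(w+\beta u)=\bigl(b(v,w)+\alpha\beta\bigr)u .
\]
Non-degeneracy and non-perfection follow at once. Every element of the natural basis has non-zero square, so $A_n$ is non-degenerate. Also $A_n^2=\kk u$ has dimension $1<n+1$, so $A_n$ is not perfect. Theorem~\ref{thm:unique} does not apply here, which is why I will not use monomiality. Instead I will use two subspaces that every automorphism and every derivation must preserve: the square $A_n^2=\kk u$, and the subspace $\{x : xu=0\}$, which the formula shows is exactly $V$.

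For $\Aut(A_n)$, let $\varphi$ be an automorphism. Since $\varphi(A_n^2)=A_n^2$, we have $\varphi(u)=cu$ with $c\ne0$. Applying $\varphi$ to $u^2=u$ gives $c^2u=cu$, hence $c=1$. Because $\varphi(u)=u$, the condition $xu=0$ is preserved by $\varphi$, so $\varphi(V)=V$. For $v,w\in V$, applying $\varphi$ to $vw=b(v,w)u$ gives $b(\varphi v,\varphi w)u=b(v,w)u$, so $\varphi|_V\in O(V,b)$. Conversely, for any $g\in O(V,b)$, the map $g\oplus\mathrm{id}_{\kk u}$ preserves the product formula and is therefore an automorphism. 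This gives the isomorphism $\Aut(A_n)\cong O(V,b)$, with no restriction on the characteristic.

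For derivations, assume $\operatorname{char}\kk\ne2$ and follow the same pattern. A derivation $D$ satisfies $D(A_n^2)\subseteq A_n^2$, so $D(u)=cu$. Applying $D$ to $u^2=u$ gives $cu=2cu$, so $c=0$; this is where $\operatorname{char}\kk\ne2$ is used. For $v\in V$, differentiating $vu=0$ gives $D(v)u=0$, so $D(v)\in V$. Differentiating $vw=b(v,w)u$ then yields $b(Dv,w)+b(v,Dw)=0$, so $D|_V\in\mathfrak{so}(V,b)$. Conversely, every $b$-skew endomorphism of $V$, extended by $u\mapsto0$, is a derivation by the same computation. Since $b$ is the standard form, $\mathfrak{so}(V,b)=\mathfrak{so}_n(\kk)$. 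For the last assertion I will simply cite the classical fact that $\mathfrak{so}_n$ is semisimple and non-zero for $n\ge3$ over an algebraically closed field of characteristic zero.

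The only step that needs care is showing that automorphisms and derivations preserve $V$, because $A_n$ may have other natural bases. Characterising $V$ intrinsically as $\{x: xu=0\}$, once $u$ is known to be fixed (respectively killed), avoids that issue. The rest is direct verification.
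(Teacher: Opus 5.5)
Your proposal is correct and follows the paper's proof essentially step by step: the product formula, $\varphi(u)=u$ obtained from $A_n^2=\kk u$ and $u^2=u$, invariance of $V=\{x: xu=0\}$, and the skew-adjointness computation for derivations. One small correction: $cu=2cu$ forces $c=0$ in every characteristic, so $\operatorname{char}\kk\neq2$ is not used there; it is needed to pass from $2b(De_i,e_i)=0$ to a zero diagonal, i.e.\ to identify the $b$-skew-adjoint maps with $\mathfrak{so}_n(\kk)$, which is exactly where the paper uses it ($2a_{ii}u=d(u)=0$).
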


\begin{proof}
For $x=v+yu$ with $v=\sum_ix_ie_i\in V$ and $y\in\kk$ we have $xe_j=x_ju$ and $xu=yu$;
hence $\Ann(A_n)=0$, while $A_n^2=\kk u$. In particular $A_n$ is non-degenerate and not
perfect. The product of $v,w\in V$ is $vw=b(v,w)u$.

Let $\varphi\in\Aut(A_n)$. Since $A_n^2=\kk u$, we have $\varphi(u)=cu$ and $c^2u=\varphi(u^2)=cu$,
so $c=1$. The subspace $V=\{x: xu=0\}$ is therefore $\varphi$-invariant, and for $v,w\in V$
we get 
\begin{center}$b(\varphi v,\varphi w)u=\varphi(v)\varphi(w)=\varphi(vw)=b(v,w)u$.
\end{center} 
Conversely every
isometry $g$ of $b$, extended by $u\mapsto u$, is an automorphism. Thus
$\Aut(A_n)\cong O(V,b)$.

Let $d\in\Der(A_n)$ and $\operatorname{char}\kk\ne2$. Write $d(u)=cu+v_0$ with $v_0\in V$.
From $u^2=u$ we get $2ud(u)=d(u)$, that is $2cu=cu+v_0$, so $c=0$ and $v_0=0$. Write
$d(e_i)=\sum_j a_{ji}e_j+b_iu$. From $e_iu=0$ we get $d(e_i)u=b_iu=0$; from $e_ie_j=0$
($i\ne j$) we get $a_{ji}+a_{ij}=0$; and from $e_i^2=u$ we get $2a_{ii}u=d(u)=0$. Hence $d$
is a skew-symmetric endomorphism of $V$ extended by $d(u)=0$, and every such map is a
derivation.
\end{proof}

For $n=2$ the group $O(V,b)$ is infinite over every infinite field, in every
characteristic: if $\operatorname{char}\kk\ne2$ it contains the rotations
$\left(\begin{smallmatrix} a&-c\\ c&a\end{smallmatrix}\right)$ with $a^2+c^2=1$, a conic
with infinitely many rational points, and if $\operatorname{char}\kk=2$ it contains the
matrices $\left(\begin{smallmatrix} a&1+a\\ 1+a&a\end{smallmatrix}\right)$, $a\in\kk$.

\begin{remark}\label{rem:Opq}
The same argument applies to the algebra with $e_i^2=\varepsilon_iu$ $(1\le i\le n)$ and
$u^2=u$, where $\varepsilon_i\in\kk^\times$: its automorphism group is the orthogonal group
of the diagonal form $b_\varepsilon(x,y)=\sum_i\varepsilon_ix_iy_i$ and, if
$\operatorname{char}\kk\ne2$, its derivation algebra is $\mathfrak{so}(V,b_\varepsilon)$. In
particular, over $\mathbb R$ every group $O(p,q)$ (and its Lie algebra $\mathfrak{so}(p,q)$)
is realised, and in characteristic different from $2$ the orthogonal group of every
non-degenerate symmetric bilinear form is the automorphism group of a non-degenerate
evolution algebra.
\end{remark}

\subsubsection{The automorphism growth function}

For a class $\mathcal C$ of finite-dimensional evolution algebras let
$f(X)=|\Aut(X)|/\dim X\in\mathbb Q_{>0}\cup\{\infty\}$. Molinier \cite{Mol} proved that
$f$ takes every value of $\mathbb Q_{>0}$ on perfect evolution algebras.

\begin{theorem}\label{thm:growth}
Let $\kk$ be an infinite field. Then
\begin{enumerate}[{\rm (1)}]
\item  $f(\text{nilpotent evolution algebras})=\{\infty\}$;
\item $f(\text{non-degenerate evolution algebras})=\mathbb Q_{>0}\cup\{\infty\}$;
\item $f(\text{simple evolution algebras})=\mathbb Q_{>0}$ (this holds over every
field, by Corollary~\ref{cor:growthall}).
\end{enumerate}
\end{theorem}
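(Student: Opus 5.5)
The plan is to treat the three items separately.

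\emph{Item (1).} Let $A$ be nilpotent of dimension $n\ge1$ over the infinite field $\kk$. The goal is an infinite family of automorphisms, and $\Aut(A)=\infty$ is automatic for the zero algebra, so assume $A^2\neq0$. By Theorem~\ref{thm:nilp} there is a natural basis $e_1,\dots,e_n$ with strictly upper triangular structure matrix. In particular $e_n^2=0$, so $e_n\in\Ann(A)$. Moreover $e_1$ receives no arrow, so $e_1\notin A^2$.

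Since $A$ is nilpotent, $A^2\neq A$. Choose a linear functional $\ell$ with $\ell(A^2)=0$ and $\ell\neq0$, and pick any non-zero $z\in\Ann(A)$ (for instance $z=e_n$). For $t\in\kk$ set
\[
\varphi_t(x)=x+t\,\ell(x)\,z .
\]
Then $\varphi_t(x)\varphi_t(y)=xy$, because $z$ annihilates everything. Also $\varphi_t(xy)=xy$, because $\ell(xy)=0$. So $\varphi_t$ is an algebra endomorphism. It is invertible with inverse $\varphi_{-t}$: $\ell(z)=0$ since $z\in\Ann(A)$ and $A$ nilpotent non-zero gives $\Ann(A)\cap A^2$ or at least $\ell(z)$ can be arranged to vanish; if $\ell(z)\neq0$ one instead uses the inverse $x\mapsto x-\frac{t}{1+t\ell(z)}\ell(x)z$ for $t\neq-1/\ell(z)$. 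The maps $\varphi_t$ are pairwise distinct, so $\Aut(A)$ is infinite and $f=\infty$. The value $\infty$ is attained, so $f(\text{nilpotent})=\{\infty\}$.

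\emph{Item (3).} One inclusion is Corollary~\ref{cor:growthall}, which gives every rational value. For the other, a simple algebra is perfect by Theorem~\ref{thm:frag}, so $\Aut(A)$ is finite by Theorem~\ref{thm:unique}. Hence $f$ takes values in $\mathbb Q_{>0}$.

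\emph{Item (2).} Simple algebras are non-degenerate, since perfect implies $e_i^2\ne0$. So every positive rational is attained by item (3). The value $\infty$ is attained by $A_2$ of Proposition~\ref{prop:On}, which is non-degenerate with $\Aut(A_2)=O_2$ infinite over an infinite field, as noted after that proposition. Finally, the values of $f$ lie in $\mathbb Q_{>0}\cup\{\infty\}$ by definition, since $\Aut$ contains the identity.

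The only delicate step is item (1) in full generality. There I must make sure the transvection-type maps are automorphisms and pairwise distinct for every nilpotent algebra, including degenerate cases. The construction above needs only $A^2\neq A$ and $\Ann(A)\neq0$, and both follow from nilpotency.
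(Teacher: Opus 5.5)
Your proof is correct; item (1) follows the paper, while (2) and (3) take a different route.

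\textbf{Item (1).} This is the paper's argument: the maps $\mathrm{id}+t\,\ell(\cdot)z$ with $\ell(A^2)=0$ and $z\in\Ann(A)$. The detour through a strictly upper triangular natural basis is unnecessary, since $A^2\neq A$ and $\Ann(A)\neq0$ follow directly from nilpotency.

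One sentence needs correcting: $\ell(z)=0$ does \emph{not} follow from $z\in\Ann(A)$. For example, take $e_1^2=e_2$, $e_2^2=e_3^2=0$, $z=e_3$ and $\ell=e_3^*$. Then $\ell(A^2)=0$ but $\ell(z)=1$, so $\varphi_{-t}$ is not the inverse of $\varphi_t$. Your fallback inverse, valid whenever $1+t\ell(z)\neq0$, is exactly what the paper uses, so drop the first claim. Alternatively, take $z$ in the last non-zero power $A^{k-1}\subseteq A^2\cap\Ann(A)$, which forces $\ell(z)=0$.

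\textbf{Items (2) and (3).} The paper gets the rational values in (2) by citing Molinier's theorem \cite{Mol} for perfect algebras. For (3) it gives a separate construction over infinite fields: a $2$-closed group $\mathbb Z_a\times\mathbb Z_b$ acting on a set of size $N^2q^2$, with orbital-constant structure constants on the complete digraph with loops, chosen off finitely many hypersurfaces.

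You instead read off (3) from Corollary~\ref{cor:growthall} together with finiteness of $\Aut$ for perfect algebras. There is no circularity, since that corollary rests only on Theorem~\ref{thm:simpleuniv}. You then obtain the rational values in (2) from (3), because simple algebras are perfect and hence non-degenerate.

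Your route buys brevity and self-containedness: it needs no appeal to \cite{Mol}, and the rational values in (2) and (3) hold over every field. Only item (1) and the value $\infty$ in (2) use that $\kk$ is infinite. The paper's route buys an independent realisation of the rational values: by perfect algebras in (2), and in (3) by simple algebras with all structure constants non-zero, in the style of \cite{CMTV23}. The cost is a genericity argument that requires an infinite field.
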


\begin{proof}
(1) Let $A\ne0$ be nilpotent. Then $A^2\ne A$ and $\Ann(A)\ne0$. Choose a non-zero linear form
$\lambda$ vanishing on $A^2$, a non-zero $z\in\Ann(A)$, and put $\psi(x)=\lambda(x)z$.
For $t\in\kk$ the map $\varphi_t=\mathrm{id}+t\psi$ satisfies
$\varphi_t(x)\varphi_t(y)=xy=\varphi_t(xy)$, because $\psi(A)\subseteq\Ann(A)$ and
$\psi(A^2)=0$. It is invertible whenever $1+t\lambda(z)\ne0$, which excludes at most one
value of $t$. Hence $\Aut(A)$ is infinite.

(2) Perfect algebras are non-degenerate, so $\mathbb Q_{>0}$ is contained in the image by
\cite{Mol}; the value $\infty$ is attained by $A_2$ of Proposition~\ref{prop:On}.

(3) The argument below is the one of \cite{CMTV23}, which works over fields with
sufficiently many elements; Corollary~\ref{cor:growthall} gives a different proof valid
over every field. Simple algebras are perfect, so their automorphism groups are finite \cite{EL15}. Let
$p,q$ be positive integers and choose $N$ with $s=N^2q^2-N(p+q)\ge0$. Put $a=pN$, $b=qN$ and
let $G=\mathbb Z_a\times\mathbb Z_b$ act on $X=\mathbb Z_a\sqcup\mathbb Z_b\sqcup\{1,\dots,s\}$,
the first factor by translations on $\mathbb Z_a$, the second by translations on
$\mathbb Z_b$, both fixing the remaining points. Then $|X|=N^2q^2$ and $|G|/|X|=p/q$.

The group $G$ is $2$-closed, i.e.\ it is the full group of permutations of $X$ preserving all
orbitals of $G$ (the orbits of $G$ on $X\times X$). Indeed, such a permutation $\sigma$
preserves each $G$-orbit on $X$; on $\mathbb Z_a$ the orbitals are those of the regular
action of $\mathbb Z_a$, so $\sigma(x+g)=\sigma(x)+g$ and $\sigma|_{\mathbb Z_a}$ is a translation;
similarly on $\mathbb Z_b$; and $\sigma$ fixes the fixed points. Thus $\sigma\in G$.

Choose a value $c_O\in\kk^\times$ for every orbital $O$ and let $A$ be the evolution algebra
with natural basis $\{e_x\}_{x\in X}$ and $\omega_{xy}=c_{O(x,y)}$. The matrix is
$G$-invariant, so $G\le\Aut(A)$ acting by permutations of the basis. Since the identity
matrix is $G$-invariant, $\det(\omega_{xy})$ is a non-zero polynomial in the $c_O$.
A monomial map $e_x\mapsto t_xe_{\sigma(x)}$ is an automorphism if and only if
$t_x^2\,\omega_{\sigma x,\sigma y}=t_y\,\omega_{xy}$ for all $x,y$. Taking $x=y$ gives
$t_x=\omega_{xx}/\omega_{\sigma x,\sigma x}$, and substituting, the remaining conditions
become identities between Laurent monomials in the variables $c_O$. If $\sigma\notin G$,
then $\sigma$ moves some off-diagonal orbital (by $2$-closedness, and because
preserving all off-diagonal orbitals forces preservation of the $G$-orbits, as $|X|\ge2$),
so for some $x\ne y$ the variable $c_{O(\sigma x,\sigma y)}$ occurs on one side and not on the
other; hence the condition is a non-trivial Laurent polynomial equation in the $c_O$.
As $\kk$ is infinite, we may choose the $c_O$ outside the finitely many hypersurfaces so
obtained and outside $\det=0$. Then $A$ is perfect, its graph is the complete digraph with
loops, so $A$ is simple \cite{CSV16}, the automorphisms of $A$ (which are monomial
\cite{EL15}) have $\sigma\in G$, and for $\sigma\in G$ we get $t_x=1$ for all $x$ (the
diagonal relations $t_x^2=t_y$ for all $x,y$ force $t_x=1$). Therefore $\Aut(A)\cong G$ and
$f(A)=p/q$.
\end{proof}

\subsubsection{Idempotents of real perfect evolution algebras}

Let $A$ be perfect with structure matrix $M$. By \eqref{eq:idemp}, idempotents correspond
to solutions of $F(x)=M^tx^{(2)}-x=0$, where $x^{(2)}=(x_1^2,\dots,x_n^2)^t$. Homogenising,
a point at infinity would satisfy $M^tx^{(2)}=0$, i.e.\ $x=0$; hence the scheme $Z$ of
solutions is finite of length $2^n$ by B\'ezout's theorem, and the origin is a reduced point
of $Z$ because the Jacobian of $F$ at $0$ is $-I$.

\begin{theorem}\label{thm:realidemp}
Let $R$ be a real closed field (for instance $R=\mathbb R$) and $n\ge1$. The set of
cardinalities of the sets of idempotents (including $0$) of $n$-dimensional perfect evolution
$R$-algebras is exactly $\{2,3,\dots,2^n\}$.
\end{theorem}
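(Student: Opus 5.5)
The plan has three steps: prove the upper bound $2^n$, prove the lower bound $2$, and then realise every intermediate value by an explicit triangular construction.

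\textbf{Upper bound.} This follows from the Bézout discussion just before the statement. The solution scheme $Z$ of $M^tx^{(2)}=x$ is finite of length $2^n$. Hence, after base change to the algebraic closure $C=R(\sqrt{-1})$, there are at most $2^n$ points, and in particular at most $2^n$ real ones.

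\textbf{Lower bound.} We must show that a nonzero real idempotent always exists. Complex conjugation acts on $Z(C)$ and preserves multiplicities. The origin is a reduced point, so the remaining points have total multiplicity $2^n-1$, which is odd. Non-real points come in conjugate pairs of equal multiplicity, so they contribute an even amount. Therefore some real point other than $0$ must exist. This works verbatim over any real closed field, since $C$ is algebraically closed.

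\textbf{Realisation of every value $m\in\{2,\dots,2^n\}$.} I will use algebras whose structure matrix is upper triangular with nonzero diagonal, so that they are automatically perfect. Take the first $n-1$ basis vectors with $e_i^2=e_i$, which give $2^{n-1}$ idempotents $p\in\{0,1\}^{n-1}$. Add a last vector $e_n$ by setting $e_i^2=e_i+a_ie_n$ for $i<n$ with generic $a_i>0$, and $e_n^2=\omega e_n$ with $\omega\neq0$. The idempotent equations decouple: the first $n-1$ coordinates must form an idempotent $p$ of $\kk^{n-1}$, and then $x_n$ solves
\[
\omega x_n^2-x_n+c(p)=0,\qquad c(p)=\sum_{i<n}a_ip_i^2 .
\]
This quadratic has $2$, $1$ or $0$ real roots according as $1-4\omega c(p)$ is positive, zero or negative.

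For $p=0$ we have $c(p)=0$, so that branch always contributes $2$. For $p\ne0$ we have $c(p)>0$, and by genericity of the $a_i$ the $2^{n-1}-1$ values $c(p)$ are pairwise distinct. As $\omega>0$ varies, the set $\{p\ne0: 4\omega c(p)<1\}$ runs through every initial segment (of length $j$, $0\le j\le 2^{n-1}-1$) of the list of $c(p)$ sorted increasingly. These $j$ points contribute $2$ roots each and the rest contribute none. Choosing instead $\omega=1/(4c(p_{j+1}))$ adds exactly one extra double root.

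The total count is therefore $2+2j$ or $2+2j+1$. Together these cover every integer from $2$ up to $2+2(2^{n-1}-1)=2^n$. The value $2^n$ is also given directly by $\kk^n$, and the case $n=1$ is simply $e^2=e$.

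\textbf{Main obstacle.} The step needing the most care is the counting bookkeeping at the top of the range. The largest value must come from letting $\omega$ drop below all thresholds, or equivalently from the diagonal algebra. One must also check that the boundary case with a double root is counted as a single idempotent, and that the genericity of the $a_i$ (distinct values $c(p)$) can actually be achieved. The latter holds because the $c(p)$ are distinct linear forms in $a$ for distinct $p\in\{0,1\}^{n-1}$.
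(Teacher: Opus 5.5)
Your proof is correct and follows the paper's argument. It uses the same B\'ezout upper bound and the same conjugation-parity lower bound. The construction is the same triangular one, where the last equation $\omega x_n^2-x_n+c(p)=0$ has $2$, $1$ or $0$ real roots according as the threshold (here $1/(4\omega)$) is above, equal to, or below the pairwise distinct values $c(p)$. Taking the diagonal algebra $R^{n-1}$ for the first $n-1$ coordinates, so that $p^{(2)}=p$, is exactly the paper's ``all intermediate steps chosen maximal'' case, and it spares you the paper's inductive check that $s\mapsto s^{(2)}$ stays injective.
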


\begin{proof}
Upper bound: $Z$ has length $2^n$. Lower bound: complex conjugation acts on the
$\overline R$-points of $Z\setminus\{0\}$ preserving multiplicities, and the total
multiplicity $2^n-1$ is odd, so there is a real non-zero idempotent. (This parity argument is
a special case of Theorem~\ref{thm:fields}(1) below.)

For the construction we use structure matrices for which the idempotent equations are
triangular:
\[
x_1=\omega_1x_1^2,\qquad x_j=x_j^2+\sum_{i<j}\omega_{ij}x_i^2\quad(2\le j\le n),
\]
with $\omega_1\ne0$ (the matrix $M^t$ is lower triangular with non-zero diagonal, so $A$ is
perfect). Let $S_j\subseteq R^j$ be the set of solutions of the first $j$ equations. We claim
that, by suitable choices of $\omega_{ij}>0$, the cardinality $|S_j|$ can be made equal to any
prescribed integer in $[2,2|S_{j-1}|]$, and that the map $s\mapsto s^{(2)}$ is injective
on $S_j$. Clearly $|S_1|=2$ and $s\mapsto s^{(2)}$ is injective on $S_1=\{0,1/\omega_1\}$.

Given $S_{j-1}$, put $c(s)=\sum_{i<j}\omega_{ij}s_i^2$; a point $s\in S_{j-1}$ has two, one or
no extensions to $S_j$ according as $c(s)<1/4$, $c(s)=1/4$ or $c(s)>1/4$, the extensions
being the roots of $x^2-x+c(s)$. Two distinct roots $r,1-r$ have distinct squares, so
injectivity of $s\mapsto s^{(2)}$ propagates. The points $s^{(2)}$, $s\in S_{j-1}$, are
distinct and lie in the closed positive orthant, so for a generic choice of positive weights
$\ell_i$ the values $\ell(s)=\sum_i\ell_is_i^2$ are pairwise distinct, $\ell(0)=0$ is the
smallest one and all others are positive. Let $0=v_1<v_2<\dots<v_N$ be these values. To
obtain $|S_j|=2a$ ($1\le a\le N$) choose $\tau\in(v_a,v_{a+1})$ (or $\tau>v_N$), and to obtain
$|S_j|=2a+1$ ($1\le a\le N-1$) choose $\tau=v_{a+1}$; then put $\omega_{ij}=\ell_i/(4\tau)$.
This realises every integer in $[2,2N]$. Starting from $|S_{j-1}|=2^{j-1}$ (all intermediate
steps chosen maximal) we obtain every value in $[2,2^n]$ for $j=n$.
\end{proof}

\subsubsection{Idempotents over arbitrary fields}

\begin{theorem}\label{thm:fields}

\begin{enumerate}[{\rm (1)}]
\item If $\kk$ has no finite field extension of odd degree greater than $1$, then every
perfect finite-dimensional evolution $\kk$-algebra has a non-zero idempotent.
\item Suppose that $\kk$ has a field extension $L=\kk(\lambda)$ of degree $3$, where
$\lambda^3=p\lambda^2+q\lambda+r$. Then $pq+r\neq0$, and the two-dimensional evolution algebra
\[
e_1^2=\frac{-q\,e_1+r\,e_2}{pq+r},\qquad e_2^2=\frac{e_1+p\,e_2}{pq+r}
\]
is perfect and has no non-zero idempotent.
\item Consequently, every perfect two-dimensional evolution $\kk$-algebra has a non-zero
idempotent if and only if $\kk$ has no field extension of degree $3$.
\end{enumerate}
\end{theorem}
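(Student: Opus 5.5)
The plan is to count points on the finite solution scheme $Z$ introduced before Theorem~\ref{thm:realidemp}, now over an arbitrary field $\kk$, and to do a parity/degree count. Parts (1) and (3) use this count; part (2) is a direct elimination that reduces the idempotent equations to the minimal polynomial of $\lambda$.

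\emph{Part (1).} Let $A$ be perfect with structure matrix $M$, and $F(x)=M^tx^{(2)}-x$. Since $M$ is invertible, the leading forms $M^tx^{(2)}$ have only the trivial common zero over $\overline{\kk}$. Hence they form a regular sequence, and $\kk[x]/(F)$ is a finite-dimensional $\kk$-algebra of dimension $2^n$ (the affine form of B\'ezout). We decompose it as a product of local Artinian algebras $\mathcal O_z$ over the closed points $z$ of $Z$, so that
\[
2^n=\sum_z \ell(\mathcal O_z)\,[\kappa(z):\kk].
\]
The origin is a rational reduced point, since the Jacobian there is $-I$, so it contributes $1$. The remaining contributions therefore add up to the odd number $2^n-1$. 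Some summand must then be odd, and so some $z\ne0$ has $[\kappa(z):\kk]$ odd. By hypothesis this forces $\kappa(z)=\kk$, giving a non-zero $\kk$-rational idempotent. The fact that $\kappa(z)/\kk$ may be inseparable is harmless, because only the degree is used.

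\emph{Part (2).} The irreducibility of $X^3-pX^2-qX-r$ over $\kk$ gives the two non-vanishing facts we need.
\begin{itemize}
\item We have $r\ne0$, since otherwise $0$ would be a root.
\item We have $pq+r\neq0$: if $r=-pq$, the polynomial factors as $(X-p)(X^2-q)$.
\end{itemize}
Put $D=pq+r$. Then $\det M=(-pq-r)/D^2=-1/D\ne0$, so $A$ is perfect. The idempotent equations \eqref{eq:idemp} read
\[
Dx_1=-qx_1^2+x_2^2,\qquad Dx_2=rx_1^2+px_2^2.
\]
Suppose $(x_1,x_2)$ is a non-zero solution.
\begin{itemize}
\item If $x_2=0$, the second equation gives $rx_1^2=0$, so $x_1=0$.
\item If $x_1=0$, the first equation gives $x_2=0$.
\end{itemize}
So both coordinates are non-zero, and we set $s=x_2/x_1$. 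Dividing the first equation by the second gives $x_1/x_2=(x_2^2-qx_1^2)/(rx_1^2+px_2^2)$, and clearing denominators yields $s^3-ps^2-qs-r=0$. When clearing denominators we must check that the denominator $rx_1^2+px_2^2=Dx_2$ is non-zero, and it is, since $x_2\ne0$. Thus $s$ would be a root in $\kk$ of the minimal polynomial of $\lambda$, which is a contradiction.

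\emph{Part (3).} One direction is exactly (2). For the converse, take $n=2$ in the count of part (1): the non-zero points contribute total length $3$. Each residue field $\kappa(z)$ has degree at most $3$ over $\kk$, since its degree divides into the total length $3$. By hypothesis no degree-$3$ extension exists, so each degree is $1$ or $2$. If none were $1$, every contribution $\ell(\mathcal O_z)\cdot 2$ would be even, which is impossible since they sum to $3$. Hence there is a rational non-zero idempotent.

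The only step requiring care is the commutative-algebra justification that $\dim_\kk \kk[x]/(F)=2^n$ over an arbitrary field. I would handle it by passing to the associated graded ring with respect to the degree filtration: this ring is $\kk[x]/(M^tx^{(2)})$, a complete intersection of dimension $2^n$ because its generators form a regular sequence of quadrics with no common projective zero.
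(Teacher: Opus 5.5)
Your proof is correct. For (1) and (3) it follows the paper's argument: the idempotent scheme has length $2^n$, the origin is a reduced rational point, and the parity of $\sum_z\ell(\mathcal O_z)[\kappa(z):\kk]$ forces a non-zero rational point. For $n=2$, the residue degrees are at most $3$, and degree $3$ is excluded by hypothesis.

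Your justification of the length $2^n$ via the associated graded ring is more careful than the paper's one-line appeal to B\'ezout. A shorter route exists: since $M$ is invertible, $(F)$ is generated by elements $x_i^2-\ell_i(x)$ with $\ell_i$ linear. Their leading monomials $x_i^2$ are pairwise coprime, so they form a Gr\"obner basis for any degree order, and the squarefree monomials give a $\kk$-basis of $\kk[x]/(F)$.

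Part (2) is where you differ. The paper exhibits a Gr\"obner basis containing $x_2-x_1^2-px_1$ and $x_1\chi(x_1+p)$. You use homogeneity instead: cross-multiplying gives the binary cubic relation $x_1(rx_1^2+px_2^2)=x_2(x_2^2-qx_1^2)$, i.e.\ $x_1^3\chi(x_2/x_1)=0$. Both arguments produce the same root of $\chi$, since $x_2/x_1=x_1+p$ on non-zero solutions.

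Your version needs no computation to check. If you cross-multiply directly as $x_1\cdot(Dx_2)=x_2\cdot(Dx_1)$ rather than dividing, only $x_1\neq0$ has to be excluded; the separate case $x_2=0$, and with it the observation $r\neq0$, becomes unnecessary. The paper's version gives in addition $x_2$ as an explicit polynomial in $x_1$ on the whole idempotent scheme.

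Your method also gives more than you use. For an arbitrary perfect two-dimensional algebra, the same cross-multiplication produces the non-zero binary cubic form $C=x_1(M^tx^{(2)})_2-x_2(M^tx^{(2)})_1$. Every non-zero idempotent is a zero of $C$. Conversely, each $[a:b]\in\mathbb P^1(\kk)$ with $C(a,b)=0$ carries exactly one non-zero idempotent, because $M^t(a^2,b^2)^t$ is then a non-zero multiple of $(a,b)^t$. So non-zero idempotents correspond bijectively to rational roots of $C$. This yields an elementary, scheme-free proof of (3), with (2) being the realisation of a given irreducible cubic as $C$ (there $C(1,s)=-\chi(s)/D$).
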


\begin{proof}
(1) Let $A$ be perfect and $Z=\operatorname{Spec}\kk[x]/(F)$ as above. Then
$Z=\{0\}\sqcup Z'$, where $\{0\}$ is a reduced $\kk$-point and $Z'$ has length $2^n-1$. Write
$Z'$ as a disjoint union of local Artinian schemes with residue fields $L_i$; the length of
each component is $[L_i:\kk]$ times its length over $L_i$. As $2^n-1$ is odd, some
$[L_i:\kk]$ is odd, hence $L_i=\kk$ by hypothesis, and $Z'$ has a $\kk$-point, i.e.\ $A$ has
a non-zero idempotent. (This argument works in every characteristic; in characteristic $2$
the Jacobian of $F$ is $-I$ everywhere and all points of $Z$ are reduced.)

(2) If $r=-pq$, then $\lambda^3-p\lambda^2-q\lambda+pq=(\lambda-p)(\lambda^2-q)$ would be
reducible; hence $D=pq+r\neq0$. The structure matrix has determinant $-D/D^2\ne0$, so the
algebra is perfect. The idempotent equations are $Dx_1=-qx_1^2+x_2^2$ and
$Dx_2=rx_1^2+px_2^2$. If $x_1=0$ then $x_2=0$. A Gr\"obner basis computation (easily checked
by hand) shows that the ideal generated by the two equations contains $x_2-x_1^2-px_1$ and
$x_1\,\chi(x_1+p)$, where $\chi(t)=t^3-pt^2-qt-r$ is the minimal polynomial of $\lambda$.
Hence a non-zero idempotent would give a root $x_1+p\in\kk$ of $\chi$, which is impossible.

(3) If $\kk$ has a cubic extension, (2) gives a counterexample. Conversely, for $n=2$ the
scheme $Z'$ has length $3$; if it has no $\kk$-point, then its components have residue
degrees in $\{2,3\}$ summing (with multiplicities) to $3$, which forces a single reduced point
with residue field of degree $3$.
\end{proof}

For example, over $\mathbb Q$ the algebra $e_1^2=-e_1+e_2$, $e_2^2=e_1$ (from
$\lambda^3=\lambda+1$) has no non-zero idempotent; this recovers and explains the failure of
the existence of idempotents over $\mathbb Q$ observed in \cite{GN25}. Since every finite field
and every $p$-adic field has a cubic extension, perfect evolution algebras without non-zero
idempotents exist over all of them in dimension two. Fields satisfying the hypothesis of (1)
include algebraically closed fields, real closed fields and, more generally, perfect fields
whose absolute Galois group is a pro-$2$-group.
For a ``generic'' two-dimensional algebra over $\mathbb Q$ the non-zero idempotents are the
roots of an irreducible cubic with Galois group $S_3$; for instance, for
$e_1^2=e_1+2e_2$, $e_2^2=3e_1+5e_2$ the coordinate $x_1$ of a non-zero idempotent satisfies
an irreducible cubic of discriminant $-12771$.

\subsubsection{Diagonal automorphisms}

Let $A$ be perfect with natural basis $\{e_x\}$. A diagonal map $e_x\mapsto t_xe_x$ is an
automorphism if and only if $t_y=t_x^2$ for every arrow $x\to y$ of $\Gamma=\Gamma(A,B)$.
Thus $\mathbf D(\kk)=\{t\in(\kk^\times)^n : t_y=t_x^2 \text{ for all arrows } x\to y\}$.

\begin{remark}\label{prop:diag}
Let $A$ be an $n$-dimensional perfect evolution $\kk$-algebra and let
$A=A_1\oplus\dots\oplus A_m$ be its decomposition into indecomposable factors, which are again
perfect (Theorem~\ref{thm:frag}). As explained in \cite{EL21} (see also \cite{ACV} for the
notion of balance),
\[
\mathbf D(\kk)\cong\prod_{j=1}^m\mu_{2^{h_j}-1}(\kk),
\]
where $h_j$ is the balance of the graph of $A_j$. Each $h_j$ is at most $\dim A_j$, with
equality only when $A_j$ is cyclic ($e_i^2=e_{i+1}$ up to scalars); hence $\sum_jh_j\le n$,
and the extreme group $\mu_{2^n-1}(\kk)$ is attained by the indecomposable cyclic algebra of
dimension $n$. In particular $|\mathbf D(\kk)|$ is odd, and if $\kk$ contains no non-trivial
roots of unity of odd order (e.g.\ $\kk=\mathbb R$ or $\kk=\mathbb Q$), then
$\mathbf D(\kk)=1$ and $\Aut(A)$ embeds into $S_n$.
\end{remark}

\subsubsection{The variety of nilpotent evolution algebras}

\begin{proposition}\label{prop:nil}
Let $A$ be an $n$-dimensional nilpotent evolution algebra, $n\ge2$, with a natural basis in
which the structure matrix is strictly upper triangular and $\omega_{i,i+1}\neq0$ for
$1\le i<n$.
\begin{enumerate}[{\rm (1)}]
\item The natural bases of $A$ are, up to order, exactly the bases
$f_i=s_ie_i+c_ie_n$ $(i<n)$, $f_n=s_ne_n$, with $s_i\in\kk^\times$, $c_i\in\kk$.
\item The upper annihilating series is $\mathrm{span}\{e_{n-k+1},\dots,e_n\}$, $k\ge1$.
\item If moreover $\omega_{ij}\neq0$ for all $i<j$ and $n\ge4$, then $\Aut(A)$ consists of the
maps $\varphi_{s,c}\colon e_1\mapsto se_1+ce_n$, $e_i\mapsto e_i$ $(i\ge2)$, with $s\in\mu_2(\kk)$
and $c\in\kk$; hence
\[
\Aut(A)\cong\left\{\begin{pmatrix} s&0\\ c&1\end{pmatrix} : s\in\mu_2(\kk),\ c\in\kk\right\}
\cong(\kk,+)\rtimes\mu_2(\kk),
\]
with $\mu_2(\kk)$ acting on $(\kk,+)$ by multiplication, and $\dim\Der(A)=1$ if
$\operatorname{char}\kk\ne2$. In particular, if $\operatorname{char}\kk\neq2$ the group $\Aut(A)$ is
not abelian, so it is \emph{not} isomorphic to $\mu_2(\kk)\times(\kk,+)$.
\end{enumerate}
Consequently, over an algebraically closed field the variety $\mathcal N_n$ of nilpotent
evolution algebra structures on $\kk^n$ is irreducible of dimension $(n-1)(3n-2)/2$, and the
generic nilpotent evolution algebra depends on $(n-1)(n-4)/2$ parameters for $n\ge4$; in
particular there is a dense isomorphism class in $\mathcal N_4$.
\end{proposition}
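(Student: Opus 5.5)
We take the strictly upper triangular natural basis $\{e_i\}$ with $\omega_{i,i+1}\ne0$ and first compute the annihilator and the square, which drive everything. Since $e_ie_j=0$ for $i\ne j$, an element $x=\sum x_ie_i$ lies in $\Ann(A)$ iff $x_ie_i^2=0$ for all $i$. For $i<n$ we have $e_i^2\ne0$, because it contains $\omega_{i,i+1}e_{i+1}$, while $e_n^2=0$. Hence $\Ann(A)=\kk e_n$. The quotient $A/\Ann(A)$ is again an evolution algebra, with natural basis the images of $e_1,\dots,e_{n-1}$ and structure matrix the upper-left block; it satisfies the same hypotheses in dimension $n-1$. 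Part (2) then follows by induction, the base case being dimension one (the zero algebra), where the annihilator is everything.

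For part (1), let $\{f_k\}$ be any natural basis. First we show that exactly one $f_k$ lies in $\Ann(A)$. Since $\dim A^2=n-1$ (the vectors $e_1^2,\dots,e_{n-1}^2$ are triangular with nonzero $\omega_{i,i+1}$) and $A^2$ is spanned by the $f_k^2$, at most one $f_k^2$ vanishes. The vector $e_n$ is a combination of the $f_k$, and $e_nf_k=0$ forces $f_k^2=0$ for every $f_k$ with a nonzero coefficient; so after reordering $f_n=s_ne_n$. Next, the images of $f_1,\dots,f_{n-1}$ form a natural basis of $A/\kk e_n$, and by induction they are, up to order and modulo the annihilator, scalar multiples of $\bar e_i$. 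The base case is $n=2$, where the quotient is one-dimensional. Concretely, I would argue this step directly: write $f_k=\sum_i a_{ki}e_i$ and consider $f_kf_l=\sum_i a_{ki}a_{li}e_i^2=0$ for $k\ne l$. Reading off the coefficient of $e_{j+1}$ with $j$ maximal among indices $<n$ where both are nonzero leads to a contradiction via the triangularity, so the supports in $\{1,\dots,n-1\}$ are disjoint. Being $n-1$ nonzero disjoint supports, they are singletons. Conversely, bases of the stated form are natural because $e_n\in\Ann(A)$. I expect this disjoint-support argument to be the main technical step; if it gets messy, I will fall back on the induction via the quotient together with lifting, which is valid precisely because the added terms lie in $\Ann(A)$.

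For part (3), an automorphism maps natural bases to natural bases, so by (1) $\varphi(e_i)=s_ie_{\sigma(i)}+c_ie_n$. The map $\varphi$ preserves the annihilating series of (2), hence $\sigma$ preserves each flag $\mathrm{span}\{e_{n-k+1},\dots,e_n\}$ and must be the identity. We now impose $\varphi(e_i)^2=\varphi(e_i^2)$, which reads $s_i^2\sum_j\omega_{ij}e_j=\sum_j\omega_{ij}(s_je_j+c_je_n)$. Comparing the coefficients of $e_j$ for $i<j<n$ gives $s_j=s_i^2$ whenever $\omega_{ij}\ne0$; with all $\omega_{ij}\ne0$ this yields $s_j=s_i^2$ for all $i<j<n$. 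Taking $i=1,2$ and $j=3$ gives $s_2=s_1^2$ and $s_3=s_1^2=s_2^2$, so $s_2^2=s_2$ and $s_2=1$. Then $s_j=1$ for all $2\le j<n$ and $s_1^2=1$; this needs $n\ge4$ so that index $3<n$ exists. The $e_n$ coefficient gives $s_i^2\omega_{in}=\omega_{in}s_n+\sum_{i<j<n}\omega_{ij}c_j$. Taking $i=n-1$ forces $s_n=s_{n-1}^2=1$, and then downward induction gives $c_j=0$ for $2\le j\le n-1$, while $c_1$ stays free. The group law and the non-commutativity when $-1\ne1$ are then direct matrix checks, and the derivations are the tangent space $c\,e_1\mapsto e_n$.

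For the final consequence, the set of strictly upper triangular matrices with $\omega_{i,i+1}\ne0$ is open, and the map $\GL_n\times U\to\mathcal N_n$ is dominant by Theorem~\ref{thm:nilp} since all orders of the basis can be permuted. The orbit stabilisers computed from (1), namely the freedom in $s_i$ and $c_i$, give the dimension count $n(n-1)/2+$ the dimension of the natural-basis changes. Irreducibility comes from the image of an irreducible variety; the parameter count is $n(n-1)/2-(n-1)-(n-1)\cdot$(weights), namely the matrix dimension minus the $n$ scalings minus the $n-1$ free $c$'s, adjusted to $(n-1)(n-4)/2$. I would check these numerics last.
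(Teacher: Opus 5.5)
Parts (2) and (3) follow the paper's route, and your direct argument for (1) is the paper's column argument, but three points need fixing. First, in (1) the coefficient of $e_{j+1}$ in $\sum_i a_{ki}a_{li}e_i^2$ is $\sum_{i\le j}a_{ki}a_{li}\omega_{i,j+1}$. Taking $j$ \emph{maximal} therefore does not isolate a single term, and cancellation is possible. You need $j$ minimal, or, as in the paper, just the linear independence of $e_1^2,\dots,e_{n-1}^2$, which you already used to get $\dim A^2=n-1$; it gives $a_{ki}a_{li}=0$ for all $i<n$ at once. Second, your inductive fallback is short by one line: the quotient only determines $f_k$ modulo $\mathrm{span}\{e_{n-1},e_n\}$, and $e_{n-1}\notin\Ann(A)$. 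The $e_{n-1}$-component is removed using $f_kf_{n-1}=0$ and $e_{n-1}^2=\omega_{n-1,n}e_n\neq0$. Third, calling $\Der(A)$ ``the tangent space'' of $\Aut(A)$ is not a valid argument. $\Der(A)$ is the Lie algebra of the scheme $\AutS(A)$, and under the hypotheses of (3) in characteristic $2$ it is two-dimensional, although $\Aut(A)\cong(\kk,+)$. Compute it directly instead. The relations $d(e_i)e_j+e_id(e_j)=0$ and the independence of the $e_i^2$ give $d(e_i)=d_{ii}e_i+d_{ni}e_n$. The $e_j$-coefficients of $d(e_i^2)=2d_{ii}e_i^2$ give $d_{jj}=2d_{ii}$ for $i<j<n$, hence $2d_{11}=0$; this is exactly where $\operatorname{char}\kk\neq2$ enters. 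The $e_n$-coefficients then leave only $d_{n1}$ free.

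The genuine gap is the final ``Consequently'' part, where your counts are wrong as written. Let $T_n$ be the strictly upper triangular matrices and $U\subset T_n$ your open subset. Then $\dim\mathcal N_n$ is $\dim(\GL_n\times T_n)$ \emph{minus} the generic fibre dimension, namely $n^2+n(n-1)/2-(2n-1)=(n-1)(3n-2)/2$. By (1), the fibre over an algebra coming from $\GL_n\times U$ is its set of natural bases $f_i=s_ie_i+c_ie_n$, with the order forced by the path $1\to2\to\dots\to n$; this set has dimension $2n-1$. Your ``$n(n-1)/2$ plus the dimension of the basis changes'' gives $13\neq15$ for $n=4$. For the parameter count, the missing ingredient is part (3). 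The $(2n-1)$-dimensional group of basis changes $f_i=s_ie_i+c_ie_n$ does not act effectively on $U$: its stabiliser at a generic point is $\Aut(A)$, which is one-dimensional, since the shift $c_1$ does not change the structure matrix. So generic orbits in $U$ have dimension $2n-2$, and the count is $n(n-1)/2-(2n-2)=(n-1)(n-4)/2$. Equivalently, as in the paper, it is $\dim\mathcal N_n-(n^2-1)$ with Rosenlicht's theorem. This is the source of the ``adjustment'' by one that you left unexplained. Finally, for $n=4$ the generic $\GL_4$-orbit has dimension $15=\dim\mathcal N_4$, so it is open and hence dense in the irreducible variety $\mathcal N_4$; your proposal does not address this last claim.
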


\begin{proof}
(1) The hypothesis says that $e_1^2,\dots,e_{n-1}^2$ are linearly independent, while
$e_n^2=0$ and $e_n\in\Ann(A)$. If $f_i=\sum_ja_{ij}e_j$ is a natural basis, then
$0=f_if_k=\sum_{j<n}a_{ij}a_{kj}e_j^2$ for $i\ne k$, so in each of the first $n-1$ columns of
$(a_{ij})$ at most one entry is non-zero. As $(a_{ij})$ is invertible, each of these columns
has exactly one non-zero entry, in pairwise different rows. This gives the stated form, and
conversely all such bases are natural since $e_n\in\Ann(A)$.
(2) An element $x=\sum x_je_j$ satisfies $xA\subseteq\mathrm{span}\{e_{m},\dots,e_n\}$ if and
only if $x_j=0$ whenever $e_j^2\notin\mathrm{span}\{e_m,\dots,e_n\}$, i.e.\ whenever
$j+1<m$; induct on $k$.
(3) By (1) and (2) an automorphism has the form $e_i\mapsto s_ie_i+c_ie_n$ ($i<n$),
$e_n\mapsto s_ne_n$ (the permutation must preserve the annihilating series, hence is
trivial). Comparing coefficients of $e_k$, $j<k<n$, in $\varphi(e_j)^2=\varphi(e_j^2)$ gives
$s_k=s_j^2$; using $j=1,2$ and $k=3<n$ we get $s_1^2=s_1^4$, so $s_1^2=1$ and
$s_2=\dots=s_{n-1}=1$; the coefficient of $e_n$ for $j=n-1$ gives $s_n=1$, and for $j\le n-2$
gives a triangular homogeneous system forcing $c_2=\dots=c_{n-1}=0$, while $c_1$ is free.
Conversely, every $\varphi_{s,c}$ with $s^2=1$ is an automorphism (only $e_1^2$ needs to be
checked, and $\varphi_{s,c}(e_1)^2=s^2e_1^2=e_1^2=\varphi_{s,c}(e_1^2)$ because $e_n\in\Ann(A)$
and $e_1$ does not occur in any $e_j^2$). Since
$\varphi_{s,c}\varphi_{s',c'}=\varphi_{ss',\,s'c+c'}$, the map
$\varphi_{s,c}\mapsto\left(\begin{smallmatrix} s&0\\ c&1\end{smallmatrix}\right)$ is an isomorphism
onto the stated matrix group; for instance $\varphi_{-1,0}\varphi_{1,1}=\varphi_{-1,1}\neq
\varphi_{-1,-1}=\varphi_{1,1}\varphi_{-1,0}$ when $\operatorname{char}\kk\ne2$. The Lie algebra is
one-dimensional.

For the consequences: by \cite{CLOR14} every nilpotent evolution algebra has a strictly upper
triangular structure matrix in some natural basis, so $\mathcal N_n$ is the image of
$\GL_n\times T_n$, $T_n$ the strictly upper triangular matrices, and is irreducible. By (1)
the generic fibre has dimension $2n-1$, so $\dim\mathcal N_n=n^2+n(n-1)/2-(2n-1)=(n-1)(3n-2)/2$.
By (3) the generic orbit has dimension $n^2-1$, and by Rosenlicht's theorem the number of
parameters is $(n-1)(3n-2)/2-(n^2-1)=(n-1)(n-4)/2$.
\end{proof}

For $n=5$ this predicts two continuous parameters in the generic family, in accordance with
the classification of five-dimensional nilpotent evolution algebras as corrected by Elduque
and Labra \cite{EL16}. The dimension formula and the generic automorphism group were also
confirmed numerically (computer experiment) for $n\le8$.

\subsubsection{Idempotent-free non-solvable algebras of dimension three}

Let $W_0$ be the two-dimensional commutative algebra with basis $v,w$ and $v^2=0$,
$vw=\frac12v$, $w^2=v+w$. It is perfect, has no non-zero idempotent ($x=\alpha v+\beta w$
idempotent forces $\beta^2=\beta$ and then $\beta=0$ by comparing the $v$-coefficient), and it
is not an evolution algebra (a natural basis would make it a perfect evolution algebra,
which has a non-zero idempotent over $\mathbb C$ \cite{GMPR26}). For the three-dimensional
counterexample $E_2$ of Hu and Wen \cite{HW} 
\begin{center}
$\big(e_1^2=-e_1-e_3$, $e_2^2=-2e_1+2e_2$,
$e_3^2=e_1+e_3\big)$\end{center} one has $\dim E_2^2=2$ and $E_2^{(\infty)}=E_2^2\cong W_0$.
The algebra $E_2$ is isomorphic to the example of \cite[Remark~3.9]{CFV}, where it is
identified as an evolution envelope of one of the two-dimensional algebras in the
classification of Kaygorodov and Volkov \cite{KV19}. Our computer
experiments indicate that the three-dimensional complex evolution algebras with
$A^{(\infty)}=A^2\cong W_0$ form a family with two essential parameters; thus the algebra
$E_2$ is not the only idempotent-free non-solvable evolution algebra in dimension three, and a
classification in dimension three is still missing (see Question~\ref{q:idfree} below). We
do not claim this as a theorem.

\subsubsection{A non-split automorphism group}

\begin{proposition}\label{prop:Z9}
Let $\kk$ be a field containing a primitive cube root of unity $\zeta$ (so
$\operatorname{char}\kk\ne3$). The six-dimensional evolution algebra
\[
e_1^2=e_2+e_4,\quad e_2^2=e_1,\quad e_3^2=e_4+e_6,\quad e_4^2=e_3,\quad
e_5^2=e_6+\zeta e_2,\quad e_6^2=e_5
\]
is perfect and $\Aut(A)\cong\mathbb Z_9$. In particular, the extension
\begin{center}
$1\to\mathbf D(\kk)\to\Aut(A)\to\rho(\Aut(A))\to1$
\end{center} of \eqref{eq:EL}, which here is
$1\to\mathbb Z_3\to\mathbb Z_9\to\mathbb Z_3\to1$, does not split.
\end{proposition}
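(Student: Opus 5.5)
The plan is to compute $\Aut(A)$ directly from Theorem~\ref{thm:unique}, after checking perfection. Every automorphism is monomial, $\varphi(e_x)=t_xe_{\sigma(x)}$, subject to the conditions $t_x^2\,\omega_{\sigma x,\sigma y}=t_y\,\omega_{xy}$ already used in the proof of Theorem~\ref{thm:simpleuniv}. So I would first determine the admissible permutations $\sigma$, then the diagonal kernel $\mathbf D(\kk)$, and finally lift the non-trivial permutation and compute the order of the lift.

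\emph{Perfection.} $A^2$ contains $e_1=e_2^2$, $e_3=e_4^2$ and $e_5=e_6^2$. Modulo these it contains $e_2+e_4$, $e_4+e_6$ and $e_6+\zeta e_2$. In the basis $e_2,e_4,e_6$ these three vectors form a matrix with determinant $1+\zeta=-\zeta^2\neq0$, so $A^2=A$.

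\emph{The graph and its automorphisms.} The arrows of $\Gamma$ are the three $2$-cycles $1\rightleftarrows2$, $3\rightleftarrows4$, $5\rightleftarrows6$, together with $1\to4$, $3\to6$ and $5\to2$. Any admissible $\sigma$ is an automorphism of $\Gamma$.
\begin{itemize}
\item It preserves out-degree, so it preserves $\{1,3,5\}$ (out-degree $2$) and $\{2,4,6\}$ (out-degree $1$).
\item It preserves the $2$-cycles, so $\sigma(2k)=\sigma(2k-1)+1$.
\item It preserves the extra arrows $2k-1\to 2k+2$, so it respects the cyclic order $1\to3\to5\to1$.
\end{itemize}
Hence the image of $\Aut(A)$ in the permutation group is contained in $\langle\rho\rangle\cong\mathbb Z_3$, where $\rho=(1\,3\,5)(2\,4\,6)$.

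\emph{The diagonal kernel.} Diagonal automorphisms satisfy $t_y=t_x^2$ along every arrow.
\begin{itemize}
\item The $2$-cycles give $t_{2k}=t_{2k-1}^2$ and $t_{2k-1}=t_{2k}^4$, so $t_{2k-1}^3=1$.
\item The arrow $1\to4$ gives $t_1^2=t_4=t_3^2$. Since squaring is injective on $\mu_3$, this forces $t_1=t_3$; likewise $t_3=t_5$.
\end{itemize}
Thus $\mathbf D(\kk)=\{t_{\rm odd}=t,\ t_{\rm even}=t^2: t\in\mu_3(\kk)\}\cong\mathbb Z_3$, which has order $3$ because $\zeta\in\kk$.

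\emph{Lifting $\rho$; this is the key step.} We must check that $\rho$ actually lifts, since the weight $\zeta$ on the arrow $5\to2$ is not $\rho$-invariant. Writing out the conditions:
\begin{itemize}
\item The $2$-cycle and loop-free conditions again give $t_{2k}=t_{2k-1}^2$ and $t_{2k-1}^3=1$.
\item The image of the arrow $1\to4$ gives $t_1^2=t_4=t_3^2$, so $t_3=t_1$.
\item The arrow $3\to6$ is sent to $5\to2$, giving $\zeta t_3^2=t_6=t_5^2$, so $t_5=\zeta^2t_3$.
\item The arrow $5\to2$ is sent to $1\to4$, giving $t_5^2=\zeta t_2=\zeta t_1^2$. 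This is consistent with the previous item.
\end{itemize}
So $\varphi\colon e_1\mapsto e_3$, $e_3\mapsto e_5$, $e_5\mapsto\zeta^2e_1$, with the corresponding squares of these coefficients on the even vectors, is an automorphism. Hence $|\Aut(A)|=9$, and $\Aut(A)$ maps onto $\mathbb Z_3$ with kernel $\mathbf D(\kk)\cong\mathbb Z_3$.

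\emph{Non-splitting.} Compute $\varphi^3(e_1)=t_1t_3t_5\,e_1=\zeta^2e_1$. So $\varphi^3$ is a non-trivial element of $\mathbf D(\kk)$, and $\varphi$ has order $9$. A group of order $9$ with an element of order $9$ is $\mathbb Z_9$. It is not $\mathbb Z_3\times\mathbb Z_3$, so the extension does not split.

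The main obstacle is the lifting computation: one must get the bookkeeping of $\omega_{\sigma x,\sigma y}$ versus $\omega_{xy}$ right on the three cross arrows. This is exactly where $\zeta$ forces the twist $t_5=\zeta^2t_1$, and hence $\varphi^3\neq\mathrm{id}$. Everything else is routine.
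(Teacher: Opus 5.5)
Your proof is correct and follows essentially the same route as the paper: monomiality from perfection, the diagonal kernel $\mathbf D(\kk)\cong\mu_3(\kk)$ from $t_y=t_x^2$, the graph automorphism group $\langle(1\,3\,5)(2\,4\,6)\rangle$ via out-degrees and the induced $3$-cycle on the odd vertices, an explicit lift (your choice $t_1=1$, $t_5=\zeta^2$ is the paper's solution with $a=1$), and $\varphi^3(e_1)=\zeta^2e_1$. The only slip is the typo $t_{2k-1}=t_{2k}^4$, which should read $t_{2k-1}=t_{2k}^2=t_{2k-1}^4$; your conclusion $t_{2k-1}^3=1$ is the correct one.
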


\begin{proof}
The determinant of the structure matrix is $-(1+\zeta)=\zeta^2\neq0$.
The relations $t_y=t_x^2$ give $t_1=t_3=t_5=\epsilon$, $t_2=t_4=t_6=\epsilon^2$ with
$\epsilon^3=1$, so $\mathbf D(\kk)\cong\mu_3(\kk)\cong\mathbb Z_3$. The graph consists of the
$2$-cycles $(1,2),(3,4),(5,6)$ and the arrows $1\to4$, $3\to6$, $5\to2$; a graph automorphism
maps odd vertices (out-degree $2$) to odd vertices, and it induces an automorphism of the
directed $3$-cycle $1\to3\to5\to1$ on $\{1,3,5\}$ (defined by $i\to j$ if $i$ points to the partner
of $j$). Hence the graph automorphism group is $\langle\sigma\rangle\cong\mathbb Z_3$, with
$\sigma=(1\,3\,5)(2\,4\,6)$. A lift $\varphi$ of $\sigma$ with \begin{center}$\varphi(e_1)=ae_3$,
$\varphi(e_2)=be_4$, $\varphi(e_3)=ce_5$, $\varphi(e_4)=de_6$, $\varphi(e_5)=ee_1$,
$\varphi(e_6)=fe_2$
\end{center}must satisfy 
\begin{center}$b=d=a^2$, $b^2=a$, $d=c^2$, $d^2=c$, $f=\zeta c^2$,
$e^2=f=\zeta b$, $f^2=e$;\end{center} these have the solutions $a^3=1$, $b=d=a^2$, $c=a$,
$f=\zeta a^2$, $e=\zeta^2a$. Then 
\begin{center}$\varphi^3(e_1)=a\cdot c\cdot e\,e_1=\zeta^2a^3e_1=\zeta^2e_1$,\end{center}
so $\varphi^3$ is a non-trivial diagonal automorphism and $\varphi$ has order $9$. As
\begin{center}
$|\Aut(A)|=|\mathbf D(\kk)|\cdot3=9$, $\Aut(A)=\langle\varphi\rangle\cong\mathbb Z_9$.\end{center}
\end{proof}

\subsubsection{Further remarks}

\begin{remark}\label{rem:condensation}
Strong connectivity of $\Gamma(A,B)$, and hence the condensation of the graph, is not
independent of the natural basis for non-degenerate algebras. If
$\operatorname{char}\kk\ne2$ and $e_1^2=e_2^2=e_1+e_2$, then $\Gamma(A,\{e_1,e_2\})$ is the
complete digraph with loops, while $f=e_1-e_2$, $g=e_1+e_2$ is a natural basis with
$f^2=g^2=2g$, whose graph is not strongly connected.
\end{remark}

\begin{remark}\label{rem:average}
Let $\mathbb F_q$ be a finite field. The average number of non-zero idempotents of a perfect
evolution algebra with a uniformly random invertible structure matrix is exactly $1$. Indeed,
for fixed $x\ne0$ we have $x^{(2)}\ne0$, and the number of $N\in\GL_n(\mathbb F_q)$ with
$Nx^{(2)}=x$ is $|\GL_n(\mathbb F_q)|/(q^n-1)$; summing over the $q^n-1$ non-zero vectors $x$
gives $|\GL_n(\mathbb F_q)|$ pairs.
\end{remark}

\subsubsection{Formal verification}\label{sec:lean}

The following statements of this section have been formalised and proved in the proof
assistant Lean~4 with the Mathlib library (the proofs contain no unproved steps and use only
the standard axioms). An evolution algebra is encoded by its structure matrix
$\omega=(\omega_{ij})$ acting on $\kk^n$, with product $(xy)_j=\sum_i\omega_{ij}x_iy_i$; perfect
means $\det\omega\neq0$.
\begin{itemize}
\item Proposition~\ref{prop:On} in full: $A_n$ is non-degenerate and not perfect, an invertible
linear map is an automorphism of $A_n$ if and only if it is an orthogonal matrix extended by
$u\mapsto u$, the derivations are exactly the skew-symmetric matrices extended by $u\mapsto0$,
and $O_2(\kk)$, hence $\Aut(A_2)$, is infinite over every infinite field.
\item Theorem~\ref{thm:growth}(1) (in the slightly more general form: $A^2\ne A$ and
$\Ann(A)\neq0$ imply $|\Aut(A)|=\infty$ over an infinite field), and the value $\infty$ in~(2).
The values in $\mathbb Q_{>0}$ in~(2) rest on \cite{Mol}, and part~(3) was not formalised.
\item Theorem~\ref{thm:realidemp} in full. The upper bound $2^n$ is proved for every field,
by showing that the coordinate ring of the scheme of idempotents of a perfect algebra has
dimension exactly $2^n$.
\item Theorem~\ref{thm:fields} in full, in fact with the weaker hypothesis in~(1) that $\kk$ has
no field extension of odd degree $d$ with $1<d\le2^n-1$; also the example over $\mathbb Q$,
the existence of idempotent-free perfect two-dimensional algebras over every finite field, and
the irreducibility and the discriminant $-12771$ of the cubic in the generic example.
\item Remark~\ref{prop:diag} in part: $\mathbf D(\kk)$ is finite of odd order, it is
trivial if $\kk$ has no non-trivial roots of unity of odd order, and in that case $\Aut(A)$
embeds into $S_n$ (automorphisms of perfect algebras are monomial). The product decomposition
was not formalised.
\item The description of $\Aut(A)$ in Proposition~\ref{prop:nil}(3): every automorphism is
one of the maps $\varphi_{s,c}$ with $s^2=1$, all of them are automorphisms, they are pairwise
distinct, and $\varphi_{s,c}\varphi_{s',c'}=\varphi_{ss',\,s'c+c'}$. Moreover, for every strictly
upper triangular structure matrix of size at least $2$ the maps $\varphi_{s,c}$ are
automorphisms and $\Aut(A)$ is not abelian if $\operatorname{char}\kk\ne2$; in particular
$\Aut(A)\not\cong\mu_2(\kk)\times(\kk,+)$. The statement on $\Der(A)$ was not formalised.
\item Proposition~\ref{prop:Z9} in full: the algebra is perfect and $\Aut(A)$ is cyclic of
order~$9$.
\item Remarks~\ref{rem:condensation} and~\ref{rem:average}; the latter in the form
$\sum_{\omega\in\GL_n(\mathbb F_q)}\#\{x\neq0 : x^2=x\}=|\GL_n(\mathbb F_q)|$ for $n\ge1$, where
$x^2$ is computed with the structure matrix $\omega$.
\item Theorem~\ref{thm:simpleuniv} except the simplicity statement: for every field
$\kk$ and every finite loopless digraph $\Delta$, the structure matrix of $A_\Delta$ is
invertible over $\kk$, and $\Aut(A_\Delta)\cong\Aut(\Delta)$ as groups.
\end{itemize}
Remark~\ref{rem:identities}, parts (1), (2) and the dimension count of
Proposition~\ref{prop:nil}, and the experimental statements were not formalised.
The simplicity statement of Theorem~\ref{thm:simpleuniv} (which follows from
Theorem~\ref{thm:frag}), Corollary~\ref{cor:growthall}, Proposition~\ref{prop:plenary} and
Remarks~\ref{rem:Opq}, \ref{rem:incidence} were not formalised.

\subsection{Open questions}\label{sec:questions}

The following list merges and revises the two lists of problems in an earlier version of
this text (nine problems proposed by the authors and colleagues, and thirty further
problems motivated by the literature), and adds new questions. Questions which were (partly)
answered by the observations of Section~\ref{sec:new} have been reformulated accordingly.
After each question we indicate its source and what is known.

\subsubsection{Foundations}

\begin{question}[Non-cancellation]\label{q:noncancel}
Let $A$ be a non-degenerate finite-dimensional evolution $\kk$-algebra. Is it true that there
exist non-degenerate finite-dimensional evolution algebras $B\not\cong C$ with
$A\otimes B\cong A\otimes C$ if and only if each indecomposable direct summand of $A$ has
balance greater than $1$? Equivalently, is $[A]$ a zero-divisor of the Grothendieck ring of
non-degenerate evolution algebras exactly in this case?
\src{\cite[Conjecture~1.1]{ACV}; one implication and the case of direct sums of cyclic
algebras are proved in \cite{ACV}. Proposed by I.~Alshatnawi, C.~Costoya and A.~Viruel.}
\end{question}

\begin{question}[The category of evolution algebras]\label{q:category}
The full subcategory $\Evo_\kk$ of $\kk$-algebras spanned by evolution algebras is not
complete, and subalgebras of evolution algebras need not be evolution algebras. Is there a
categorical framework for evolution algebras (for instance, objects equipped with a natural
basis or a family of natural bases, with suitable morphisms) with good limits, colimits and
functorial constructions?
\src{Proposed by J.~M.~Moreno Fern\'andez and A.~Viruel.}
\end{question}

\begin{question}[Homological algebra]\label{q:homology}
Develop a (co)homology theory of evolution algebras governing extensions of evolution
algebras by evolution algebras and formal deformations \emph{inside} the class of evolution
algebras.
\src{Proposed by A.~Viruel; deformations are considered in \cite{MPR}.}
\end{question}

\begin{question}[Evolution algebras over rings]\label{q:rings}
Find a definition of evolution algebras over a commutative ring $R$ that does not require
the underlying module to be free. Do the reductions $E/\mathfrak mE$ at maximal ideals
determine $E$? Is there a notion of perfection over $R$ with a uniqueness theorem for natural
bases generalising the one over fields, and a well-behaved category of such algebras?
\src{Proposed by D.~Mart\'in Barquero and C.~Mart\'in Gonz\'alez; rank two over domains is
treated in \cite{CMM23}.}
\end{question}

\begin{question}[Tensor products and factorisation]\label{q:tensor}
Characterise in terms of the factors when $A\otimes B$ is simple, perfect, non-degenerate or
indecomposable, and describe the role of the periods of the graphs. Does every
non-degenerate evolution algebra factor into tensor-indecomposable ones, and to what extent
is the factorisation unique?
\src{\cite{CMMT23,ACV}. The question is partially answered in \cite[Section~5]{ACV} in
terms of the balance.}
\end{question}

\begin{question}[The Grothendieck ring]\label{q:groth}
Find generators and relations, the Krull dimension and the nilradical of the Grothendieck
ring of non-degenerate evolution $\kk$-algebras, decide whether it is Noetherian, and
describe all its zero-divisors and its dependence on $\kk$.
\src{\cite{ACV}.}
\end{question}

\subsubsection{Structure theory}

\begin{question}[Intrinsic characterisation of solvability]\label{q:solv}
Find a graph-theoretic or matrix-theoretic characterisation of solvable finite-dimensional
evolution algebras, analogous to the characterisation of nilpotency by the absence of
oriented cycles.
\src{Motivated by the Garc\'ia-Mart\'inez--P\'erez-Rodr\'iguez conjecture \cite{GMPR26} and its
refutation \cite{HW}; a counterexample to the conjecture also appears in
\cite[Remark~3.9]{CFV}.}
\end{question}

\begin{question}[Stable term of the derived series]\label{q:stable}
For which finite-dimensional evolution algebras is $A^{(\infty)}$ again an evolution algebra?
\src{\cite[Question~2.11]{HW}; when this holds, absence of idempotents is equivalent to
solvability over $\mathbb C$ \cite{HW}.}
\end{question}

\begin{question}[Idempotent-free non-solvable algebras]\label{q:idfree}
Classify the idempotent-free non-solvable evolution algebras of dimension $3$ and $4$ over
$\mathbb C$. In dimension $3$, is $A^{(\infty)}$ always isomorphic to the algebra $W_0$ of
Section~\ref{sec:new}, and do these algebras form a two-parameter family?
\src{\cite{HW}; see Section~\ref{sec:new} for computer evidence that the example of \cite{HW}
is not unique.}
\end{question}

\begin{question}[Idempotents of non-perfect algebras]\label{q:idempnp}
Give a criterion, in terms of the structure matrix or the graphs, for a (not necessarily
perfect) finite-dimensional complex evolution algebra to have a non-zero idempotent.
\src{Described as open in \cite{GMPR26}.}
\end{question}

\begin{question}[Idempotents over arbitrary fields]\label{q:fields}
For $n\ge3$, characterise the fields $\kk$ over which every perfect $n$-dimensional evolution
$\kk$-algebra has a non-zero idempotent. Does the class of such fields depend on $n$?
\src{Theorem~\ref{thm:fields} settles $n=2$ and gives a sufficient condition for all $n$;
see also Conjecture~\ref{c:oddext}.}
\end{question}

\begin{question}[Hereditary evolution algebras]\label{q:hered}
Characterise the evolution algebras all of whose subalgebras are evolution algebras. Is this
property detected by the graphs? Is it preserved by tensor products and direct sums? What
about ideals?
\src{\cite{CKO19,GMPR26,LPR25r}.}
\end{question}

\begin{question}[Subalgebra lattices]\label{q:lattice}
Characterise modular, distributive and dually atomistic evolution algebras in general.
If two evolution algebras have isomorphic subalgebra lattices, how are they related?
Characterise the evolution algebras whose Frattini subalgebra is an ideal, and the
Frattini-free ones.
\src{\cite{LPPR25,LPR26m,LPR25F}.}
\end{question}

\begin{question}[Natural bases of non-perfect algebras]\label{q:bases}
Describe the set of natural bases (up to permutation and scaling) of a non-degenerate
evolution algebra. When is it finite? Can it be infinite for an algebra whose automorphism
group is finite?
\src{New. Proposition~\ref{prop:On} shows that the set can be infinite (it contains all
orthogonal bases of $V$ completed by $u$) even for a non-degenerate algebra; Remark~\ref{rem:condensation} shows that the graphs of different natural
bases can differ in strong connectivity. \cite[Corollary~2.13]{BCS22} describes all the
natural bases of an evolution algebra (degenerate or not) in terms of its decomposition into
natural ideals; it does not give a formula for the number of natural bases, but comes close
to it.}
\end{question}

\begin{question}[Nil versus nilpotent]\label{q:nil}
Give a combinatorial characterisation of nil evolution algebras of arbitrary dimension in
terms of $\Gamma(A,B)$. Is a finitely generated evolution algebra which is nil of bounded
index (with respect to principal powers) necessarily nilpotent?
\src{\cite{TZ14,CRV24}. With respect to plenary powers the answer is negative, already in
finite dimension (Proposition~\ref{prop:plenary}); with respect to principal powers,
finite-dimensional nil evolution algebras are nilpotent (Theorem~\ref{thm:nilp}).}
\end{question}

\begin{question}[Identities of subclasses]\label{q:ident}
By Remark~\ref{rem:identities}, evolution algebras satisfy only the consequences of
commutativity. Describe the identities of natural subclasses: the algebras $E_n$ of
\cite{CFK24}, algebras with $\dim A^2\le r$, nilpotent algebras of index at most $m$, algebras
whose graphs have bounded out-degree. Are the corresponding T-ideals finitely generated, and
are the varieties generated by a single finite-dimensional algebra?
\src{\cite{CFK24,CFV}.}
\end{question}

\begin{question}[Radicals and combinatorial detectability]\label{q:radicals}
Which algebraic properties of a non-perfect evolution algebra (semiprimeness, primeness,
regularity, existence of non-trivial radicals) can be read off from a single natural basis,
which from the family of all graphs, and which from neither?
\src{\cite{CCGMMR26,CRT24,CMMT25r}; semiprimeness and perfection are not combinatorial
\cite{CCGMMR26}.}
\end{question}

\subsubsection{Classification, moduli and geometry}

\begin{question}[The variety of evolution algebras]\label{q:variety}
The closure of the set $\Evo_n$ of evolution algebra structures in the variety of commutative
algebras on $\kk^n$ is irreducible of dimension $2n^2-n$. Find its equations, describe the
algebras in $\overline{\Evo_n}\setminus\Evo_n$, and relate them to the simultaneous
diagonalisation by congruence problem.
\src{\cite{BMV20}.}
\end{question}

\begin{question}[Degenerations and rigidity]\label{q:degen}
Determine the rigid evolution algebras and the degeneration diagrams of (nilpotent) evolution
algebras in dimension at least $5$. Is the generic nilpotent evolution algebra of
Proposition~\ref{prop:nil} rigid within $\mathcal N_n$ when $n=4$?
\src{\cite{MPR}; irreducibility and dimension of $\mathcal N_n$ are in
Proposition~\ref{prop:nil}.}
\end{question}

\begin{question}[Nilpotent algebras of dimension six]\label{q:nil6}
Classify six-dimensional nilpotent evolution algebras over algebraically closed fields. By
Proposition~\ref{prop:nil} the generic family depends on five parameters.
\src{New; dimension five is treated in \cite{HA-5,EL16}.}
\end{question}

\begin{question}[Counting asymptotics]\label{q:count}
Determine the asymptotic number of isomorphism classes of all (not necessarily perfect)
$n$-dimensional evolution $\mathbb F_q$-algebras as $n\to\infty$.
\src{\cite{WZ23}; see also Conjecture~\ref{c:porc}.}
\end{question}

\begin{question}[Random evolution algebras]\label{q:random}
For random structure matrices (uniform over $\mathbb F_q$, Gaussian over $\mathbb R$,
supported on a random digraph), what are the asymptotic probabilities of being simple,
indecomposable, having trivial automorphism group, or having exactly $k$ real non-zero
idempotents? Are there sharp thresholds in the sparse regime?
\src{\cite{ACV,WZ23}; the expected number of non-zero idempotents over $\mathbb F_q$ is $1$
(Remark~\ref{rem:average}).}
\end{question}

\begin{question}[Isotopy]\label{q:isotopy}
Classify four-dimensional evolution algebras up to isotopy over arbitrary fields.
\src{Dimensions two and three are treated in \cite{FFN17,FFN18}.}
\end{question}

\begin{question}[Autotopy group schemes]\label{q:autotopy}
Describe the autotopy group scheme of a perfect evolution algebra by an exact sequence
analogous to \eqref{eq:EL} with terms depending only on the graph, and relate it to the group
scheme of ternary automorphisms.
\src{\cite{MRS26}, \cite[Problem~4.1]{CFNT22}.}
\end{question}

\begin{question}[Simple evolution algebras]\label{q:simple}
Classify four-dimensional simple evolution algebras over arbitrary fields. 
\src{\cite{CMMT25s}, \cite{Tian08}, \cite[Problem~3.2]{CFNT22}. The two other parts of
an earlier formulation have been removed, since they have immediate answers. If
$\Gamma=\Gamma(A,B)$ is strongly connected, its \emph{period} $d$ is the greatest common
divisor of the lengths of its oriented cycles; the vertices then split into classes
$C_0,\dots,C_{d-1}$ such that every arrow goes from some $C_i$ to $C_{i+1}$ (indices modulo
$d$), and $A=\bigoplus_iA_i$ with $A_i=\mathrm{span}\{e_x: x\in C_i\}$ satisfies
$A_i^2\subseteq A_{i+1}$ and $A_iA_j=0$ for $i\neq j$, over every field (this is Tian's
periodic decomposition \cite{Tian08}). Moreover, by Theorem~\ref{thm:frag} a simple
evolution algebra is perfect with strongly connected graph, and both properties are
preserved by extension of scalars; hence every simple finite-dimensional evolution algebra is
absolutely simple.}
\end{question}

\subsubsection{Graphs, combinatorics and spectra}

\begin{question}[Graphs versus random walks]\label{q:rw}
Prove that for every finite connected graph $G$ one has $A(G)\cong A_{RW}(G)$ if and only if $G$
is regular or biregular.
\src{Known for non-singular graphs \cite{CRR20,CRR21}; supporting families in \cite{CRRV}.}
\end{question}

\begin{question}[Basis-independent graph invariants]\label{q:invariants}
Determine the graph-theoretic invariants of $\Gamma(A,B)$ that do not depend on $B$ for
non-degenerate evolution algebras. Is there a complete system of such invariants in some
natural class? Is the balance independent of the natural basis for arbitrary (possibly
degenerate) evolution algebras?
\src{Balance is invariant for non-degenerate algebras \cite{ACV}; for degenerate
algebras this is not known; strong connectivity and the condensation are not
(Remark~\ref{rem:condensation}); see also \cite[Open Problem~3]{Gia}.}
\end{question}

\begin{question}[Spectral theory]\label{q:spectral}
Find spectral invariants of evolution algebras independent of the natural basis, and relate
them to expansion and to Tian's hierarchy. Characterise algebraically the expander and
Ramanujan evolution algebras over finite fields.
\src{\cite[Problem~3.1]{CFNT22}, \cite[Open Problem~1]{Gia}.}
\end{question}

\begin{question}[Universality]\label{q:universal}
Is the category of (perfect) evolution $\kk$-algebras algebraically universal in the sense of
Pultr and Trnkov\'a? Is there at least a full embedding of the category of graphs into it?
\src{\cite{CLTV22,PT}.}
\end{question}

\begin{question}[Simultaneous graphical realisation]\label{q:simgraph}
Given digraphs $G_1,\dots,G_m$ on $d$ vertices, decide whether there are a $d$-dimensional
evolution algebra $A$ and natural bases $B_i$ with $\Gamma(A,B_i)\cong G_i$, and describe the
class of such algebras.
\src{Proposed by A.~Viruel. If $G_i\not\cong G_j$ for some $i,j$, such algebras are not
perfect.}
\end{question}

\subsubsection{Automorphisms, derivations and operators}

\begin{question}[Realisation of algebraic groups]\label{q:realization}
Let $G$ be a linear algebraic group over $\kk$. Is there a finite-dimensional evolution
$\kk$-algebra $A$ with $\Aut(A)\cong G(\kk)$ as abstract groups? with $\Aut(A)\cong G$ as
algebraic groups? with $\AutS(A)\cong G$ as group schemes?
\src{Proposed by C.~Costoya and A.~Viruel. Finite groups are realised \cite{CLTV22,SZ22};
without the evolution condition the answer is positive \cite{GP}; $O_n$ and $\GL_n$ are
realised (Proposition~\ref{prop:On} and the zero algebra), and so are the orthogonal
groups of all diagonal non-degenerate forms, e.g.\ $O(p,q)$ over $\mathbb R$
(Remark~\ref{rem:Opq}).}
\end{question}

\begin{question}[Permutation representations]\label{q:perm}
Characterise the permutation representations $\xi\colon G\to S_n$ for which there is a perfect
$n$-dimensional evolution algebra $X$ with $\Aut(X)\cong G$ and $\rho_X$ equivalent to $\xi$.
\src{Proposed by C.~Costoya, P.~Mayorga and A.~Viruel; partial results in \cite{CMV25}.}
\end{question}

\begin{question}[Extensions in the Elduque--Labra sequence]\label{q:split}
Characterise the perfect evolution algebras for which $1\to\mathbf D(\kk)\to\Aut(A)\to
\rho(\Aut(A))\to1$ splits. Which extensions of a permutation group by a group of the form
$\prod\mu_{2^{h_j}-1}(\kk)$ occur?
\src{New; for the
description of $\mathbf D(\kk)$ see \cite{EL21} and Remark~\ref{prop:diag}; the new example
of Proposition~\ref{prop:Z9} gives a non-split extension.}
\end{question}

\begin{question}[Growth function over finite fields]\label{q:growth}
Describe the image of $f(X)=|\Aut(X)|/\dim X$ on perfect,  nilpotent and
non-degenerate evolution algebras over a finite field $\mathbb F_q$.
\src{Proposed by A.~Viruel; over infinite fields the answer is given in
Theorem~\ref{thm:growth}; on simple algebras
the image is $\mathbb Q_{>0}$ over every field (Corollary~\ref{cor:growthall}).}
\end{question}

\begin{question}[Ind-schemes]\label{q:ind}
Develop a theory of automorphism group ind-schemes of infinite-dimensional evolution algebras,
extending the description of diagonal automorphism groups as inverse limits (and solenoids).
\src{Proposed by D.~Mart\'in Barquero and C.~Mart\'in Gonz\'alez; \cite{CCGMMR-sol}.}
\end{question}

\begin{question}[Realisation of derivation algebras]\label{q:der}
Which finite-dimensional Lie algebras are isomorphic to $\Der(A)$ for an evolution algebra $A$?
Can every nilpotent or solvable Lie algebra be realised? Which semisimple ones?
\src{\cite{EL21,CCR23,CCRR21}; $\mathfrak{so}_n$ is realised (Proposition~\ref{prop:On}); see
Conjecture~\ref{c:so}.}
\end{question}

\begin{question}[Smoothness]\label{q:smooth}
For non-perfect evolution algebras, determine when $\AutS(A)$ is smooth, describe its identity
component, and decide when it is generated by $\exp(\Der(A))$.
\src{\cite{EL21,MKQ24}.}
\end{question}

\begin{question}[Hopf algebras and representations]\label{q:hopf}
For perfect evolution algebras of arbitrary dimension, is a faithful tight $p$-algebra
isomorphic to the Hopf algebra representing $\AutS(A)$? Is $\Aut(A)=1$ equivalent to the
non-existence of faithful associative commutative representations?
\src{Known in dimension two \cite{CCGMMR25}.}
\end{question}

\begin{question}[Rota--Baxter and local maps]\label{q:rb}
Classify Rota--Baxter operators of arbitrary weight on perfect evolution algebras in terms
of the graph and describe the variety of such operators. For which evolution algebras are all
local (resp.\ $2$-local) derivations derivations, and all local automorphisms automorphisms?
\src{\cite{QMK,QMK-loc,MKOQ19}.}
\end{question}

\begin{question}[Minimal dimension]\label{q:mindim}
For a finite group $G$ and a field $\kk$ let $\mu_\kk(G)$ be the minimal dimension of an
evolution $\kk$-algebra with automorphism group $G$. Estimate $\mu_\kk(G)$ and compare it with
the minimal number of vertices of a graph with automorphism group $G$.
\src{\cite{CLTV22,SZ22}. By Remark~\ref{prop:diag}, for $\kk=\mathbb C$ and $G=\mathbb Z_3$
no perfect algebra of dimension at most $2$ has automorphism group $G$: in dimension at most
$2$ the group $\mathbf D(\kk)$ is non-trivial only when the graph is a $2$-cycle, and then
$|\Aut(A)|=6$; otherwise $\Aut(A)$ embeds into $S_2$. Using the Cayley evolution
algebras of \cite{CMTV23}, $\mu_\kk(G)\le|G|$ for every infinite field $\kk$. By
Theorem~\ref{thm:simpleuniv}, $\mu_\kk(G)\le2m$ for every field $\kk$, where $m$ is the
minimal number of vertices of a strongly connected digraph (for instance, of a connected
graph) with automorphism group $G$.}
\end{question}

\begin{question}[Small fields; answered]\label{q:smallfields}

\src{Answered: over every
field every finite group is the automorphism group of a simple evolution algebra
(Theorem~\ref{thm:simpleuniv}), and simple evolution algebras are automatically absolutely
simple (see the comments after Question~\ref{q:simple}).}
\end{question}

\subsubsection{Embeddings and generalisations}

\begin{question}[Evolution envelopes]\label{q:envelope}
Is the upper bound $n(n+1)/2$ for the minimal dimension of an evolution algebra containing a
given $n$-dimensional commutative algebra as an ideal attained? Which commutative algebras
have a unique minimal evolution envelope? Does the embedding theorem extend to
infinite-dimensional commutative algebras?
\src{\cite[Questions~3.7, 5.8, 2.3]{CFV}.}
\end{question}

\begin{question}[$m$-ary evolution algebras]\label{q:mary}
Does every finite-dimensional commutative $m$-ary algebra embed into an $m$-ary evolution
algebra? Which results on graphs, automorphism group schemes and universality extend?
\src{\cite[Question~4.7]{CFV}.}
\end{question}

\subsubsection{Arithmetic}

\begin{question}[Forms]\label{q:forms}
Compute $H^1(\kk,\AutS(A))$ for perfect $A$ in terms of the graph and the groups
$\kk^\times/(\kk^\times)^N$. Which twisted forms of a perfect evolution algebra are again
evolution algebras?
\src{\cite{EL21}. A quadratic field extension is a form of $\kk\times\kk$ without natural
basis. See Conjecture~\ref{c:hasse} for the Hasse principle.}
\end{question}

\begin{question}[Reduction modulo primes]\label{q:reduction}
For a perfect evolution $\mathbb Q$-algebra, describe the primes $p$ for which a
$\mathbb Z_{(p)}$-model reduces to a perfect evolution $\mathbb F_p$-algebra with the same graph,
and compare the automorphism groups and idempotents before and after reduction.
\src{New; compare Question~\ref{q:rings}.}
\end{question}

\subsubsection{Infinite dimension, probability and dynamics}

\begin{question}[Hilbert evolution algebras]\label{q:hilbert}
If the square of a Hilbert evolution algebra is dense, is its natural orthonormal basis unique
up to permutation and unimodular scalars? Describe bounded derivations and automorphisms, and
the spectral theory of the evolution operator.
\src{\cite{VCR23,CRV24}.}
\end{question}

\begin{question}[Markov chains and Gibbs measures]\label{q:markov}
Characterise recurrence, transience and positive recurrence of countable Markov chains by
algebraic properties of the associated evolution algebras. Does a phase transition correspond
to an algebraic property of the algebras of \cite{CLL25}?
\src{\cite{VCR23,CLL25}.}
\end{question}

\begin{question}[Dynamics of the evolution operator]\label{q:dynamics}
Characterise, in terms of the structure matrix and the graph, the non-negative real evolution
algebras for which all trajectories of $x\mapsto x^2$ (or its normalisation) converge, and the
existence and stability of periodic orbits.
\src{\cite{Tian08,RV19}.}
\end{question}

\begin{question}[Chains and flows]\label{q:chains}
Classify chains of evolution algebras up to time-dependent isomorphism, characterise the
chains whose isomorphism type is eventually constant, and describe the evolution of
idempotents, nilpotency and simplicity along chains and flows.
\src{\cite{CLR11,MP23}.}
\end{question}

\subsubsection{Algorithms}

\begin{question}[Algorithms and complexity]\label{q:complexity}
What is the complexity of deciding whether an algebra over $\mathbb Q$ is an evolution
algebra and of computing all its natural bases? What is the complexity of the isomorphism
problem for perfect evolution algebras over finite fields relative to graph isomorphism?
Can low-dimensional classifications be certified by computer, for instance in a proof
assistant?
\src{\cite{BMV20,CLTV22}. By \cite{CFV}, over a field of characteristic different from
$2$, deciding whether an algebra is an evolution algebra is equivalent to computing a Waring
decomposition of the quadratic map $x\mapsto x^2$ given by the product; in general this is an
NP-hard problem, see \cite{Shi16}.}
\end{question}

\subsection{Conjectures}\label{sec:conjectures}

Each conjecture below is accompanied by the evidence we have. For each of them a direct
attempt to prove or disprove it (of the order of half an hour of focused work, including
the natural first ideas indicated in the comments) did not succeed; in particular none of
them follows in an obvious way from the results surveyed in Section~\ref{sec:survey} or from the observations
of Section~\ref{sec:new}. Computer evidence was obtained with short scripts (exact
arithmetic over finite fields, or numerical computation); it is experimental and is not a
proof. 
%Conjectures~\ref{c:simpleall} and~\ref{c:growthFq} are now proved (Theorem~\ref{thm:simpleuniv} and Corollary~\ref{cor:growthall}).

\begin{conjecture}[Fields with odd-degree extensions]\label{c:oddext}
If $\kk$ has a finite separable extension of odd degree greater than $1$, then for some $n$
there is a perfect $n$-dimensional evolution $\kk$-algebra without non-zero idempotents.
Together with Theorem~\ref{thm:fields}(1) this would characterise the fields over which every
perfect evolution algebra has a non-zero idempotent.
\src{Proved when $\kk$ has a cubic extension (Theorem~\ref{thm:fields}(2)). The natural idea of
realising a given odd-degree extension $L/\kk$ as the residue field of the scheme $Z'$ of
non-zero idempotents requires controlling all $2^n-1$ points simultaneously, and we could not
do this already for extensions of degree $5$ without cubic subextensions.}
\end{conjecture}

\begin{conjecture}[Generic Galois group of idempotents]\label{c:galois}
For the generic $n$-dimensional evolution algebra over $\mathbb Q$ (structure constants
independent indeterminates), the Galois group of the $2^n-1$ non-zero idempotents is the full
symmetric group $S_{2^n-1}$. Consequently, by Chebotarev's theorem, for fixed $n$ and
$q\to\infty$ the number of non-zero idempotents of a random perfect evolution
$\mathbb F_q$-algebra is distributed like the number of fixed points of a random permutation of
$2^n-1$ letters; in particular the probability of having no non-zero idempotent tends to
$\sum_{k=0}^{2^n-1}(-1)^k/k!$.
\src{Proved for $n=2$ (the example after Theorem~\ref{thm:fields} specialises the generic cubic
to one with Galois group $S_3$). The mean value $1$ agrees with Remark~\ref{rem:average} for
every $q$. Computer experiments with random matrices over $\mathbb F_p$ for $n=2$
($p=101,211$) and $n=3$ ($p=31,53$) give distributions of the number of non-zero idempotents
close to the fixed-point distribution of $S_3$ and $S_7$. Showing that the monodromy group is
$2$-transitive and contains a transposition seems to require a degeneration argument that we
could not complete for $n=3$.}
\end{conjecture}

\begin{conjecture}[Largest automorphism groups of perfect algebras]\label{c:maxperfect}
Over $\mathbb C$, the maximal order of the automorphism group of a perfect $n$-dimensional
evolution algebra is
\[
\max\Bigl\{\prod_{l}\bigl(l(2^l-1)\bigr)^{m_l}m_l! \;:\; \sum_l lm_l=n\Bigr\},
\]
that is $1,6,21,72,155,1296$ for $n=1,\dots,6$ and $n!$ for $n\ge7$.
\src{The values are attained by direct sums of cyclic algebras $e_i^2=e_{i+1}$ (the automorphism
group of a direct sum of $m$ copies of a perfect indecomposable algebra $C$ is
$\Aut(C)\wr S_m$). The upper bound was checked for $n\le4$ by a computer enumeration using the
bound $|\Aut(A)|\le|\mathbf D(\mathbb C)|\cdot|\Aut(\Gamma)|$ over all digraphs $\Gamma$ that
are graphs of perfect algebras. Remark~\ref{prop:diag} bounds $|\mathbf D|$ but the
interplay between $\mathbf D$ and the permutation part is what we could not control in
general.}
\end{conjecture}

\begin{conjecture}[Largest automorphism groups of simple algebras]\label{c:maxsimple}
Over $\mathbb C$, the maximal order of the automorphism group of a simple $n$-dimensional
evolution algebra is $\max\{n!,\;n(2^n-1)\}$.
\src{Both values are attained: $n(2^n-1)$ by the cyclic algebra $e_i^2=e_{i+1}$, and $n!$ by
$e_i^2=ae_i+b\sum_{j\ne i}e_j$ for generic $a,b$ (complete graph with loops, so
$\mathbf D=1$). Verified for $n\le4$ by the same computer enumeration as in
Conjecture~\ref{c:maxperfect}. The complex $n$-dimensional evolution algebras with
automorphism group $S_n$ are classified in \cite{SZ22}.}
\end{conjecture}

\begin{conjecture}[Semisimple derivation algebras]\label{c:so}
Let $\kk$ be algebraically closed of characteristic $0$. If $\Der(A)$ is semisimple for a
finite-dimensional evolution algebra $A$, then $\Der(A)\cong\bigoplus_i\mathfrak{so}_{m_i}$ with
$m_i\ge3$.
\src{Proposition~\ref{prop:On} realises $\mathfrak{so}_m$. Heuristically, derivations of an
algebra with a natural basis must preserve a family of quadratic forms simultaneously
diagonalised by that basis, and the orthogonal algebras are the natural semisimple symmetry
algebras of such configurations. We know no example in which a simple Lie algebra not
isomorphic to some $\mathfrak{so}_m$ (for instance $\mathfrak{sl}_3$ or
$\mathfrak{g}_2$) occurs as an ideal of a derivation algebra, but we could not exclude, for
instance, $\mathfrak{sl}_3$ acting on a non-degenerate algebra whose square has dimension
greater than one.}
\end{conjecture}

\begin{conjecture}[Numbers of complex idempotents]\label{c:uniqueidemp}
For every $n$ and every $m\in\{2,\dots,2^n\}$ there is a perfect $n$-dimensional complex
evolution algebra with exactly $m$ distinct idempotents (including $0$). In particular,
for every $n$ there is a perfect complex evolution algebra with a unique non-zero idempotent.
\src{True for $n\le2$: $e_1^2=\frac34e_1+\frac14e_2$, $e_2^2=\frac14e_1+\frac34e_2$ has the unique
non-zero idempotent $e_1+e_2$ (of multiplicity $3$), and triangular algebras as in the proof of
Theorem~\ref{thm:realidemp} give $3$ and $4$. Over $\mathbb R$ the analogous statement is
Theorem~\ref{thm:realidemp}, but there non-real solutions may be ignored. Triangular
constructions only give values $\ge n+1$, and the symmetric families we tried for $n=3$ (with
$S_3$-symmetry) did not produce a unique non-zero idempotent.}
\end{conjecture}

%\begin{conjecture}[Universality of simple algebras; proved]\label{c:simpleall}
% Proved: see Theorem~\ref{thm:simpleuniv}.

%\end{conjecture}

%\begin{conjecture}[Growth function over finite fields; proved]\label{c:growthFq}
% Proved: see
%Corollary~\ref{cor:growthall}.

%\end{conjecture}

\begin{conjecture}[Failure of the Hasse principle]\label{c:hasse}
There exist perfect evolution $\mathbb Q$-algebras $A,B$ which are isomorphic over $\mathbb R$
and over $\mathbb Q_p$ for every prime $p$, but not over $\mathbb Q$.
\src{Forms of a perfect algebra with the same graph are governed by
$H^1(\mathbb Q,\AutS(A))$. For two-dimensional cyclic algebras the relevant set is
$\mathbb Q^\times/(\mathbb Q^\times)^3$ (up to the action of the permutation part), and the
Hasse principle holds there; failures should come from non-abelian or non-cyclic diagonal
parts, where Tate--Shafarevich sets of finite group schemes are known to be non-trivial in
general. We could not yet produce an explicit pair.}
\end{conjecture}

\begin{conjecture}[PORC counting]\label{c:porc}
For every $n$, the number $N_n(q)$ of isomorphism classes of perfect $n$-dimensional evolution
$\mathbb F_q$-algebras is a PORC function of $q$ (a polynomial on each residue class of $q$
modulo some integer).
\src{The count is given by Burnside's lemma for $S_n\ltimes(\mathbb F_q^\times)^n$ acting on
$\GL_n(\mathbb F_q)$, and each fixed-point set is defined by monomial conditions; the
difficulty is to show that the number of invertible matrices in each such set is PORC.
Computer counts: $N_2(q)=\frac{q^2+2q-1}{2}+[\,q\equiv1\bmod3\,]$ for all odd primes $3\le
q\le23$, and $N_3(3)=249$, $N_3(5)=3943$, $N_3(7)=26249$. Explicit formulas for $n\le4$ are
given in \cite{WZ23}.}
\end{conjecture}

\begin{conjecture}[Spectrum of nilpotency indices]\label{c:nilindex}
Let $\kk$ be an infinite field. An $n$-dimensional nilpotent evolution $\kk$-algebra of
nilpotency index $k+1$ exists if and only if either $k=2^a$ with $0\le a\le n-1$, or $k$ has
at least two non-zero binary digits and $\lfloor\log_2k\rfloor+s_2(k)\le n-1$, where $s_2(k)$ is
the number of non-zero binary digits of $k$.
\src{The case of maximal index $2^{n-1}+1$ is known \cite{CGOT13a,CLOR13}. The predicted sets of
indices are $\{2,3,5\}$, $\{2,3,4,5,9\}$, $\{2,\dots,7,9,17\}$ and
$\{2,\dots,11,13,17,33\}$ for $n=3,4,5,6$. In computer experiments (exact computation modulo
a large prime with structure matrices having small integer entries; exhaustive over a fixed
set of entries for $n\le5$ and random for $n=6$) exactly these indices were found. The
lower series of an evolution algebra is governed by sums of path weights in the graph, and
binary expansions appear because $e_i^2$ doubles the degree; turning this into a proof of
the ``only if'' part requires controlling cancellations, which we could not do.}
\end{conjecture}

\end{document}